\newtheorem{theorem}{Theorem}[section]
\newtheorem{lemma}{Lemma}[section]
\newtheorem{example}{Example}[section]
\newtheorem{remark}{Remark}[section]
\numberwithin{equation}{section}
\title{\bf Optimal $L^2$ error estimates of mass- and energy-
conserved FE schemes for a nonlinear Schr\"{o}dinger--type system}
\author{Zhuoyue Zhang\thanks{Department of Mathematics, School of Sciences, Hangzhou Dianzi University, Hangzhou, Zhejiang, P. R. China. Email address:
\texttt{zzhuoyue@hdu.edu.cn}}
\and Wentao Cai\thanks{Corresponding author; Department of Mathematics, School of Sciences, Hangzhou Dianzi University, Hangzhou, Zhejiang, P. R. China. Email address:
\texttt{femwentao@hdu.edu.cn}}
}
\date{}
\begin{document}
\maketitle

\begin{abstract}
In this paper, we present an implicit Crank--Nicolson finite element (FE) scheme for solving a nonlinear Schr\"{o}dinger--type system, which includes Schr\"{o}dinger--Helmholz system and Schr\"{o}dinger--Poisson system. In our numerical scheme, we employ an implicit Crank--Nicolson method for time discretization and a conforming FE method for spatial discretization. The proposed method is proved to be well-posedness and ensures mass and energy conservation at the discrete level. Furthermore, we prove optimal $L^2$ error estimates for the fully discrete solutions. Finally, some numerical examples are provided to verify the convergence rate and conservation properties.\\[5pt]
\textbf{Keywords}:
Crank--Nicolson method, finite element method, optimal $L^2$ error estimate, mass and energy conservation \\[5pt]
\textbf{Mathematics Subject Classification}: 65M60, 65N15, 65N30
\end{abstract}
\pagestyle{myheadings}
\thispagestyle{plain}

\section{Introduction}\label{sec:intro}
We consider the following Schr\"{o}dinger--type system:
\begin{align}                                                     &
{\bf i}u_t+\Delta u-\phi u={0},\label{i1}\\
&
\alpha\phi-\beta^2\Delta\phi=|u|^2,\label{i2}
\end{align}
for $t\in(0,T]$, in a bounded convex polygonal or polyhedral domain $\Omega\subset\mathbb{R}^d\, (d=2,3)$. In this model, ${\bf i}=\sqrt{-1}$ is the imaginary unit, $\alpha$, $\beta$ are non-negative real constants, $u({\bf x},t)$ is a complex-valued function which denotes the single particle wave function and $\phi({\bf x},t)$ is a real-valued function which denotes the potential. Here, the system \eqref{i1}-\eqref{i2} is subject to the following initial and boundary conditions:
\begin{align}\label{i3}
\begin{aligned}
&u({\bf x},0)={u_0}(\bf x),
&&\phi ({\bf x},0)=\phi_0(\bf x),
&&\textrm{in }\Omega,\\
&u({\bf x},t)=0,
&&\phi({\bf x},t)=0,
&&\textrm{on }{\partial\Omega\times(0,T]}.
\end{aligned}
\end{align}

For different parameter $\alpha$, $\beta$, the system \eqref{i1}-\eqref{i3} represents the different models:
\vspace{-5.8pt}

\begin{itemize}
\setlength{\itemsep}{0pt}
\setlength{\parsep}{0pt}
\item[\textbullet]{Schr\"{o}dinger--Helmholz model (as $\alpha \neq 0$ and $\beta\neq 0$)}
\item[\textbullet]{Schr\"{o}dinger--Poisson model (as $\alpha = 0$ and $\beta=1$)}
\end{itemize}
\vspace{-5.8pt}

The nonlinear Schr\"{o}dinger--type equations can be used to describe a variety of physical phenomena in optics, quantum mechanics, and plasma physics, as referenced in \cite{wuli1,wuli2}. The mathematical theory of the system \eqref{i1}-\eqref{i3} has been studied in \cite{Cao,analysis1, analysis2} to establish the well-posedness of strong solutions. It is evident that the Schr\"{o}dinger--type system \eqref{i1}-\eqref{i3} conserves the total mass:
\begin{align}
{\cal M}(t):={\int_\Omega|u(t)|^2}d{\bf x} ={\cal M}(0),\quad t>0,\nonumber
\end{align}
and the total energy
\begin{align}
{\cal E}(t):={\int_\Omega(|\nabla u(t)|^2+\frac{\alpha}{2}|\phi(t)|^2+\frac{\beta^2}{2}|\nabla \phi(t)|^2})d{\bf x}={\cal E}(0),\quad t>0.\nonumber
\end{align}

As pointed out in article \cite{49}, non-conservative numerical schemes may lead to numerical solutions blow up. Therefore, it is desirable to design numerical schemes that ensure mass and energy conservation at discrete level. 
In recent years, numerical methods and analysis for structure-preserving schemes for the Schr\"{o}dinger--type equations have been investigated extensively, e.g., see \cite{fdm1,fdm2,fdm3,fdmsun,fdm4,fdm5}  for finite difference methods, \cite{femD, DG1, DG2,fem4,fem2, DG3,fem5,fem3} for finite element methods, \cite{spectral1,spectral2,spectral3} for spectral or pseudo-spectral methods, and \cite{other1,otherbai,othercao,other2,otherli,other3} for others.
For linearized numerical schemes, Chang et al. \cite{1} proposed several finite difference schemes for solving the generalized nonlinear Schr\"{o}dinger (GNLS) equation. Compared to Hopscotch-type schemes, split step Fourier scheme and pseudospectral scheme, the authors demonstrated that Crank--Nicolson method was more efficient and robust. In \cite{Akrivis}, Akrivis et al. presented a linearized Crank--Nicolson FE scheme for solving GNLS equation. By the energy method, the authors obtained the optimal $L^2$ error estimate under the time step condition $\tau = \mathcal{O}(h^\frac{d}{4})$ $(d=2,3)$, where $\tau$ is the time step size, and $h$ is the spatial step size. Wang \cite{3} investigated the linearized Crank--Nicolson FE scheme for GNLS equation. Inspired by the work \cite{Li}, an error splitting method was employed to analyze the $L^2$ error of FE solutions. By introducing a time-discrete scheme, the authors established uniform boundedness and error estimates of temporal semi-discrete solutions. By the above results and inverse inequality, the uniform boundedness of fully-discrete solutions was demonstrated without grid-ratio restriction conditions. Based on these results, Wang derived unconditionally optimal $L^2$ error estimates of the Crank--Nicolson FE scheme for the GNLS equation. Recently, Wang \cite{4} studied a Crank--Nicolson FE scheme for Schr\"{o}dinger--Helmholtz equations. Utilizing the error splitting technique, optimal $L^2$ error estimates were obtained without any restrictive conditions on time-step size.

Compared to linearized numerical schemes, nonlinear numerical schemes exhibit better stability and preserve more physical structures. In \cite{Akrivis}, Akrivis et al. proposed several implicit Crank--Nicolson FE schemes for the GNLS equation. The optimal $L^2$ error estimates were obtained under certain time-step restrictive conditions. Feng et al. \cite{6} focused on the high-order SAV-Gauss collocation FE schemes for the GNLS equation, which used a Gauss collocation method for temporal discretization and the conforming FE method for spatial discretization. This scheme was shown to ensure mass and energy conservation. The authors also presented the existence, uniqueness and optimal $L^\infty(0,T;H^1)$ error estimates for fully-discrete collocation FE solutions. Henning and Peterseim \cite{8} studied the nonlinear implicit Crank--Nicolson FE method for the GNLS equation. By error splitting technique, the authors obtained the optimal $L^\infty(0,T;L^2)$ error estimate without any restrictive conditions on time-step size. In \cite{7}, a class of nonlinear discontinuous Galerkin (DG) schemes was introduced for the Schr\"{o}dinger--Poisson equations, which consists of the high-order conservative DG schemes and the second-order time discrete schemes. These schemes were proven to conserve mass and energy. Moreover, optimal $L^2$ error estimates were provided for the spatial semi-discrete DG scheme.

In this paper, we propose an implicit Crank--Nicolson FE scheme to solve a nonlinear Schr\"{o}dinger--type system, which includes Schr\"{o}dinger--Helmholz model and Schr\"{o}dinger--Poisson model. In our scheme, the system \eqref{i1}-\eqref{i3} is discretized by an implicit Crank--Nicolson scheme in the time direction and a conforming FE scheme in the spatial direction. The advantage of our numerical scheme is mass and energy conservation, ensuring that the scheme will not lead to instability. Later, we prove the existence and uniqueness of the fully-discrete numerical solutions based on Schaefer's fixed point theorem. Finally, we demonstrate the optimal $L^2$ error estimates of fully discrete numerical solutions for  Schr\"{o}dinger--Helmholz system and Schr\"{o}dinger--Poisson system.

The rest of this paper is organized as follows. In Sect.2, we introduce notations, our numerical scheme and present the main theorem. In Sect.3, we demonstrate the discrete mass and energy conservation properties of our numerical scheme. In Sect.4, we present the well-posedness of the numerical scheme by Schaefer's fixed point theorem. In Sect.5, we prove the optimal $L^2$ error estimates of fully discrete solutions. Finally, some numerical examples in Sect.6 are presented to verify the convergence rate and conservation properties.

\section{Preliminaries and the main results}

Let $\Omega$ be an open, bounded convex polygonal domain in $\mathbb{R}^2$ (or polyhedral domain in $\mathbb{R}^3$) with a smooth boundary $\partial\Omega$.
For any integer $k\geq 0$ and $1\le p\le+\infty$, let ${W^{k,p}}(\Omega)$  be the standard Sobolev space with the abbreviation $H^k(\Omega)={W^{k,2}}(\Omega)$ for simplicity. The norms of $W^{k,p}(\Omega)$ and $H^k(\Omega)$ are denoted by $\|\cdot\|_{W^{k,p}}$ and $\|\cdot\|_{H^k}$, respectively.
We denote by $\mathcal{T}_h$ a quasi-uniform partition of $\Omega$ into triangles $\{{\mathcal K}_j\}_{j=1}^M$ in $\mathbb{R}^2$ or tetrahedrons in $\mathbb{R}^3$, and let $h=\mathop{\max}\limits_{1\le j\le M}\{diam{\mathcal K}_j\}$ denote the mesh size. For a partition ${\mathcal T}_h$, we define standard FE space as
\begin{align}\nonumber
V_h=\{v_h\in {\mathcal C}(\overline\Omega): v_h|_{\mathcal{K}_j}\in P_{r}(\mathcal{K}_j)\textrm{ and }v_h=0\textrm{ on }\partial\Omega,\, \forall\,{\mathcal{K}_j}\in{\mathcal{T}_h}\},
\end{align}
where ${P_r}({\mathcal K}_j)$ is the space of continuous piecewise polynomials of degree $r\,(r\ge 1)$ on ${\mathcal K}_j$.

For any two complex functions $u,v\in{L^2}(\Omega)$, the ${L^2}(\Omega)$ inner product is defined by
\begin{align}
   (u,v) ={\int_\Omega{u({\bf x})v^*({\bf x})}}d{\bf x}, \nonumber
\end{align}
in which $v^*$ denotes the conjugate of $v$.

Let $\{t_n|t_n=n\tau;0\le n\le N\}$ be a uniform partition of the  time interval $[0,T]$ with time step size $\tau=T/N$, and $(u^n,\phi^n)=(u(t_n),\phi(t_n)) $. For any sequence $\{\omega ^n\}_{n=1}^N$,
we define 
\begin{align}
D_\tau\omega^n=\frac{\omega^n-\omega^{n-1}}{\tau}, \quad
\overline\omega^{n-\frac{1}{2}}=\frac{\omega^n+\omega^{n-1}}{2},\quad n=1,2,\dots,N.\nonumber
\end{align}

With the above notations, an implicit Crank--Nicolson FE method for the Schr\"{o}dinger--type system \eqref{i1}-\eqref{i3} is to find $(u_h^n,\phi_h^n)\in \widetilde{V}_h\times {V_h}$ such that
\begin{align}
&
{\bf i}(D_\tau u_h^n,v_h)-(\nabla\overline u_h^{n-\frac{1}{2}},\nabla v_h)-(\overline\phi _h^{n-\frac{1}{2}}\overline u_h^{n-\frac{1}{2}},v_h)=0,\quad v_h\in \widetilde{V}_h,
\label{CN1}\\
&
\alpha(\phi_h^n,w_h)+\beta^2(\nabla\phi_h^n,\nabla w_h)=(|u_h^n|^2,w_h),\quad w_h\in V_h,
\label{CN2}
\end{align}
where $\widetilde{V}_h:=V_h\oplus{\bf i}V_h$ and $n=1,2,\dots,N$. At the initial time step, we choose $u_h^0=R_h u_0$, $\phi_h^0=R_h \phi_0$. Here, $R_h$ denotes the Ritz projection.

In the remaining parts of this paper, we assume that the solution to the initial and boundary value problem \eqref{i1}-\eqref{i3} exists and satisfies
\begin{align}
\|u_t\|_{L^\infty([0,T];H^{r+1})} 
&+\|u_{tt}\|_{L^\infty([0,T];H^2)}
+\|u_{ttt}\|_{L^\infty([0,T];L^2)}\nonumber\\ 
&+\|\phi\|_{L^\infty([0,T];H^{r+1})} 
+\|\phi_{tt}\|_{L^\infty([0,T];L^2)} \le M_0, \label{solution}
\end{align}
where $M_0$ is a positive constant depends only on $\Omega$. 

We present our main results in the following theorem and the proof will be given in section 5.

\begin{theorem}\label{main}
Suppose that the system \eqref{i1}-\eqref{i3} has a unique solution $(u,\phi)$ satisfying the regularity condition \eqref{solution}. 
Then there exists a positive constant $\tau_0$, such that when $\tau\leq\tau_0$, the fully-discrete system \eqref{CN1}-\eqref{CN2} admits a unique FE solution $(u_h^n, \phi_h^n)$ satisfying
\begin{align}
\|u^n-u_h^n\|_{L^2}+\|\phi^n-\phi_h^n\|_{L^2}
\le C_0(\tau^2+h^{r+1}), \quad n=1, 2, ...,N,\nonumber
\end{align}
where $C_0^*$ is a positive constant independent of $n$, $h$ and $\tau$.
\end{theorem}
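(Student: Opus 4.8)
The strategy is to run a direct energy argument on the finite element error, using the discrete conservation laws to supply the a priori bounds that render the nonlinear terms harmless without any grid-ratio restriction. The first step is to record that the scheme conserves mass and energy: testing \eqref{CN1} with $v_h=\overline u_h^{n-\frac12}$ and taking imaginary parts gives $\|u_h^n\|_{L^2}=\|u_h^0\|_{L^2}$, while testing with $v_h=D_\tau u_h^n$, taking real parts, and using \eqref{CN2} with $w_h=\overline\phi_h^{n-\frac12}$ to rewrite the coupling term shows that $\|\nabla u_h^n\|_{L^2}^2+\tfrac{\alpha}{2}\|\phi_h^n\|_{L^2}^2+\tfrac{\beta^2}{2}\|\nabla\phi_h^n\|_{L^2}^2$ is independent of $n$. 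Since every term is nonnegative, these two identities give $\|u_h^n\|_{H^1}+\|\phi_h^n\|_{H^1}\le C$ uniformly in $n,h,\tau$ (in the Poisson case $\alpha=0$ one also invokes the Poincar\'e inequality). Because $d\le3$, the embedding $H^1\hookrightarrow L^4\cap L^6$ then controls $\|u_h^n\|_{L^4},\|u_h^n\|_{L^6},\|\phi_h^n\|_{L^4}$, and this is precisely what replaces the usual inverse inequality. Existence of $(u_h^n,\phi_h^n)$ follows from Schaefer's fixed point theorem as in Section 4, and uniqueness for $\tau\le\tau_0$ from subtracting two solutions and absorbing the gradient and nonlinear differences into the $\tau^{-1}$ produced by $D_\tau$.

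For the error estimate I split through the Ritz projection, writing $u^n-u_h^n=\rho^n+\theta^n$ and $\phi^n-\phi_h^n=\eta^n+\xi^n$ with $\rho^n=u^n-R_hu^n$, $\theta^n=R_hu^n-u_h^n$, $\eta^n=\phi^n-R_h\phi^n$ and $\xi^n=R_h\phi^n-\phi_h^n$. Standard projection bounds and \eqref{solution} give $\|\rho^n\|_{L^2}+\|\eta^n\|_{L^2}\le Ch^{r+1}$ and $\|D_\tau\rho^n\|_{L^2}\le Ch^{r+1}$ (write $D_\tau\rho^n$ as a time average of $\rho_t$ and use $u_t\in H^{r+1}$), together with the crude a priori bounds $\|\theta^n\|_{H^1}+\|\xi^n\|_{H^1}\le C$ inherited from the conservation step. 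Subtracting \eqref{CN1} from the weak form of \eqref{i1} and using the orthogonality $(\nabla\rho^n,\nabla v_h)=0$ gives, for all $v_h\in\widetilde V_h$, the error equation ${\bf i}(D_\tau\theta^n,v_h)-(\nabla\overline\theta^{n-\frac12},\nabla v_h)=({\mathcal N}^n+T^n-{\bf i}D_\tau\rho^n,v_h)$, where ${\mathcal N}^n=\overline\phi^{n-\frac12}\overline u^{n-\frac12}-\overline\phi_h^{n-\frac12}\overline u_h^{n-\frac12}$ is the nonlinear defect and $T^n$ collects the Crank--Nicolson consistency errors. The spatial part of the time consistency, namely the defect $\nabla(\overline u^{n-\frac12}-u(t_{n-\frac12}))$, is integrated by parts back to an $L^2$ pairing, which is exactly why \eqref{solution} assumes $u_{tt}\in H^2$; combined with $u_{ttt},\phi_{tt}\in L^2$ this yields $\|T^n\|_{L^2}\le C\tau^2$.

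The core step is to take $v_h=\overline\theta^{n-\frac12}$ and extract the imaginary part. The term ${\bf i}(D_\tau\theta^n,\overline\theta^{n-\frac12})$ contributes exactly $\tfrac{1}{2\tau}(\|\theta^n\|_{L^2}^2-\|\theta^{n-1}\|_{L^2}^2)$, and the Dirichlet term is real and disappears. For ${\mathcal N}^n$ I insert $u=u_h+\rho+\theta$ and $\phi=\phi_h+\eta+\xi$; the decisive point is that $(\overline\phi_h^{n-\frac12}\overline\theta^{n-\frac12},\overline\theta^{n-\frac12})=\int_\Omega\overline\phi_h^{n-\frac12}\,|\overline\theta^{n-\frac12}|^2\,\d{\bf x}\in\mathbb R$ because $\phi$ is real-valued, so its imaginary part vanishes and the only genuinely quadratic-in-$\theta$ obstruction is removed. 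The surviving pieces are estimated by H\"older, using the $L^4,L^6$ bounds from the conservation step and rewriting $u_h=u-\rho-\theta$ wherever a finite element factor would otherwise require an $L^\infty$ bound; to control the $\xi$-contributions I test the elliptic error equation obtained from \eqref{CN2} with $w_h=\xi^n$, use coercivity of $\alpha(\cdot,\cdot)+\beta^2(\nabla\cdot,\nabla\cdot)$ and the identity $\big||u^n|^2-|u_h^n|^2\big|=\big|\mathrm{Re}[(u^n-u_h^n)(\overline{u^n}+\overline{u_h^n})]\big|$ to obtain $\|\xi^n\|_{H^1}\le C(\|\theta^n\|_{L^2}+h^{r+1})$. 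Altogether $|\,\mathrm{Im}({\mathcal N}^n,\overline\theta^{n-\frac12})|\le C(\|\theta^n\|_{L^2}+\|\theta^{n-1}\|_{L^2}+h^{r+1})(\|\theta^n\|_{L^2}+\|\theta^{n-1}\|_{L^2})$.

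Collecting all contributions yields $\|\theta^n\|_{L^2}^2-\|\theta^{n-1}\|_{L^2}^2\le C\tau(\|\theta^n\|_{L^2}^2+\|\theta^{n-1}\|_{L^2}^2)+C\tau(\tau^2+h^{r+1})^2$. Summing over $n$, using $\theta^0=0$ because $u_h^0=R_hu_0$, and applying the discrete Gronwall inequality, which requires $C\tau<1$ and thereby fixes the threshold $\tau_0$, gives $\|\theta^n\|_{L^2}\le C(\tau^2+h^{r+1})$ for every $n$. The triangle inequality with $\|\rho^n\|_{L^2}\le Ch^{r+1}$ gives the claimed bound for $u^n-u_h^n$, and substituting back into $\|\xi^n\|_{H^1}\le C(\|\theta^n\|_{L^2}+h^{r+1})$ together with $\|\eta^n\|_{L^2}\le Ch^{r+1}$ gives it for $\phi^n-\phi_h^n$. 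I expect the main obstacle to be exactly the nonlinear defect ${\mathcal N}^n$: treated naively it forces an inverse estimate and hence a condition $\tau=\mathcal{O}(h^{d/4})$, whereas the whole point of the argument is that discrete energy conservation furnishes uniform $H^1$ bounds which, in dimension $d\le3$, let the cubic coupling be absorbed by $L^4$--$L^6$ H\"older estimates, while the real-valuedness of $\phi$ annihilates the one term that the conservation bounds alone could not handle.
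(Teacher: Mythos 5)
Your proposal is correct, and its skeleton is exactly the paper's: discrete conservation laws giving uniform $H^1$ bounds on $(u_h^n,\phi_h^n)$, Schaefer's theorem for existence, uniqueness by contraction for $\tau$ small, Ritz-projection splitting with $\theta^n=e_u^n$, $\xi^n=e_\phi^n$, an $O(\tau^2)$ truncation error, testing with $\overline\theta^{n-\frac12}$ and taking imaginary parts, an elliptic estimate reducing $\|\xi^n\|$ to $\|\theta^n\|_{L^2}+h^{r+1}$, and discrete Gronwall. The genuine difference is in how the nonlinear defect is tamed. The paper expands $\overline\phi^{n-\frac12}\overline u^{n-\frac12}-\overline\phi_h^{n-\frac12}\overline u_h^{n-\frac12}$ around the Ritz projections and controls the quadratic terms such as ${\rm Im}(R_h\phi^n e_u^n,\overline e_u^{n-\frac12})$ by the $L^\infty$ stability of the Ritz projection \eqref{ritz2}, with one $L^6$--$L^3$--$L^2$ H\"older estimate plus \eqref{ME4} for the cubic term; you instead keep the Crank--Nicolson averages intact and use ${\rm Im}\int_\Omega\overline\phi_h^{n-\frac12}\,|\overline\theta^{n-\frac12}|^2\,\d{\bf x}=0$, an observation the paper itself exploits only in the uniqueness proof (cf. \eqref{unique3.5}), not in the error analysis. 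Two caveats on your route. First, your remaining terms are \emph{not} all harmless under bare $L^4$/$L^6$ conservation bounds: a term like $(\overline\phi_h^{n-\frac12}\overline\rho^{n-\frac12},\overline\theta^{n-\frac12})$ pairs an FE factor with a projection error and, estimated naively by H\"older and interpolation, loses a half power of $h$; so the rewriting $\phi_h=\phi-\eta-\xi$, $u_h=u-\rho-\theta$ that you mention in passing is essential, and even after it, terms such as $(\overline\eta^{n-\frac12}\overline\theta^{n-\frac12},\overline\theta^{n-\frac12})$ need either \eqref{ritz2} or interpolation inequalities exploiting $r\ge1$ and the a priori bound $\|\theta^n\|_{H^1}\le C$ to reach your claimed bound on ${\rm Im}({\mathcal N}^n,\overline\theta^{n-\frac12})$; your proposal asserts the final inequality without carrying out this delicate bookkeeping. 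Second, in the Poisson case ($\alpha=0$, $\beta=1$) your unified coercivity-plus-Poincar\'e derivation of $\|\xi^n\|_{H^1}\le C(\|\theta^n\|_{L^2}+h^{r+1})$ is simpler than the paper's, which passes through the dual problem \eqref{ell-pro-1}--\eqref{ell-pro-2} to bound $\|e_\phi^n\|_{L^2}$; your shortcut is legitimate and gives the same rate \eqref{phi5}, so it is a mild simplification rather than a loss.
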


Next, we provide the following inequality, which will be used frequently in our proof.
\begin{lemma}\label{Gronwall} (Discrete Gronwall's inequality) {\normalfont\cite{G}}
 Let $\tau,B$ and $a_k,b_k,c_k,\gamma_k$ (for integers $k\ge 0$) be nonnegative numbers such that
\begin{align}
a_{\ell}+\tau\sum\limits_{k=0}^\ell b_k
\le\tau\sum\limits_{k=0}^\ell\gamma_k a_k
+\tau\sum\limits_{k=0}^\ell c_k+B,\quad
\textrm{for $\ell\ge 0$}.\nonumber
\end{align}
Suppose that $\tau\gamma_k<1$ for all $k$, and set $\sigma_k=(1-\tau\gamma_k)^{-1}$, then 
\begin{align}
a_{\ell}+\tau\sum\limits_{k=0}^\ell b_k
\le\exp\bigg(\tau\sum\limits_{k=0}^\ell\gamma_k\sigma _k\bigg)
\bigg(\tau\sum\limits_{k = 0}^\ell{c_k}+B\bigg),\quad
\textrm{for $\ell\ge 0$}.\nonumber
\end{align}
\end{lemma}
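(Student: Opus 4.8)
The plan is to establish the inequality by induction on $\ell$, using the classical diagonal-absorption argument. First I would introduce the monotone data sequence $E_\ell := \tau\sum_{k=0}^\ell c_k + B$, which is nondecreasing in $\ell$ since the $c_k$ are nonnegative, so that the claimed bound reads $a_\ell + \tau\sum_{k=0}^\ell b_k \le \exp(\tau\sum_{k=0}^\ell\gamma_k\sigma_k)\,E_\ell$. The key preparatory move is to peel off the diagonal term $k=\ell$ from the sum $\tau\sum_{k=0}^\ell\gamma_k a_k$ in the hypothesis, which gives $(1-\tau\gamma_\ell)a_\ell + \tau\sum_{k=0}^\ell b_k \le \tau\sum_{k=0}^{\ell-1}\gamma_k a_k + E_\ell$. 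Since $\tau\gamma_\ell<1$, I may multiply through by $\sigma_\ell=(1-\tau\gamma_\ell)^{-1}>0$, and using $\sigma_\ell\ge 1$ to keep the $b$-sum on the left I arrive at the working estimate $a_\ell + \tau\sum_{k=0}^\ell b_k \le \sigma_\ell\big(\tau\sum_{k=0}^{\ell-1}\gamma_k a_k + E_\ell\big)$.

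From this working estimate I would prove by induction the bound $a_\ell \le \sigma_\ell E_\ell\exp(\tau\sum_{k=0}^{\ell-1}\gamma_k\sigma_k)$; the case $\ell=0$ is immediate because the inner sum is empty. For the inductive step I substitute the hypothesis for $a_0,\dots,a_{\ell-1}$ into $\tau\sum_{k=0}^{\ell-1}\gamma_k a_k$, bound each $E_k$ by $E_{\ell-1}$ using monotonicity, and reduce matters to controlling the sum $\tau\sum_{k=0}^{\ell-1}\gamma_k\sigma_k\exp(\tau\sum_{j=0}^{k-1}\gamma_j\sigma_j)$.

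Two elementary scalar inequalities carry the argument, and I would isolate them at the outset. The first, $x\le e^x-1$ for $x\ge 0$ applied with $x=\tau\gamma_k\sigma_k$, turns the last sum into a telescoping one bounded by $\exp(\tau\sum_{k=0}^{\ell-1}\gamma_k\sigma_k)-1$; feeding this back yields $a_\ell + \tau\sum_{k=0}^\ell b_k \le \sigma_\ell E_\ell\exp(\tau\sum_{k=0}^{\ell-1}\gamma_k\sigma_k)$, whose $a$-part is exactly the induction hypothesis at index $\ell$. The second inequality, $\sigma_\ell\le\exp(\tau\gamma_\ell\sigma_\ell)$ --- equivalent to $1+y\le e^y$ with $y=\tau\gamma_\ell\sigma_\ell=\tau\gamma_\ell/(1-\tau\gamma_\ell)$ --- absorbs the stray prefactor $\sigma_\ell$ into the exponent and produces the stated form $a_\ell + \tau\sum_{k=0}^\ell b_k \le \exp(\tau\sum_{k=0}^\ell\gamma_k\sigma_k)\,E_\ell$.

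I expect the main difficulty to be bookkeeping rather than conceptual: one must retain the nonnegative $b$-sum on the left throughout (hence the recurrent appeal to $\sigma_k\ge 1$), apply the monotonicity shifts $E_k\le E_{\ell-1}\le E_\ell$ at exactly the right places, and verify the telescoping identity carefully. Once the two scalar inequalities are in hand, the remainder is routine manipulation of sums.
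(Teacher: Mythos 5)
Your proof is correct. A point of comparison: the paper does not prove this lemma at all --- it is quoted verbatim, with citation, from Heywood and Rannacher \cite{G}, so there is no in-paper argument to match against. What you have reconstructed is essentially the classical induction proof from that reference: peel off the diagonal term (legitimate since $\tau\gamma_\ell<1$), multiply by $\sigma_\ell$ and use $\sigma_\ell\ge 1$ to keep the nonnegative $b$-sum on the left, then close a strong induction via the telescoping bound $\tau\gamma_k\sigma_k\exp\big(\tau\sum_{j<k}\gamma_j\sigma_j\big)\le\exp\big(\tau\sum_{j\le k}\gamma_j\sigma_j\big)-\exp\big(\tau\sum_{j<k}\gamma_j\sigma_j\big)$, which is exactly your $x\le e^x-1$ step. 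All the delicate points check out: the working estimate $a_\ell+\tau\sum_{k=0}^{\ell}b_k\le\sigma_\ell\big(\tau\sum_{k=0}^{\ell-1}\gamma_k a_k+E_\ell\big)$ is valid, the monotonicity shift $E_{\ell-1}\le E_\ell$ is applied where needed, and your final absorption step is even slightly stronger than stated, since $\sigma_\ell(1-\tau\gamma_\ell)=1$ gives the identity $\sigma_\ell=1+\tau\gamma_\ell\sigma_\ell$, so $\sigma_\ell\le\exp(\tau\gamma_\ell\sigma_\ell)$ is just $1+y\le e^y$ applied at $y=\tau\gamma_\ell\sigma_\ell$ exactly. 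The one structural requirement worth making explicit in a write-up is that the induction must be strong induction (the working estimate at index $\ell$ involves all of $a_0,\dots,a_{\ell-1}$), which your substitution step implicitly uses; with that stated, the argument is complete and self-contained, which is arguably a service the paper's bare citation does not provide.
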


Throughout this paper, we denote by $C$ a generic constant, and $\varepsilon$ a sufficiently small generic constant,  which could be different at different occurrences.

\section{Discrete conservation laws}
In this section, we demonstrate that the proposed scheme \eqref{CN1}-\eqref{CN2} conserves mass and energy. 

\begin{theorem}\label{energy}
The solution of scheme \eqref{CN1}-\eqref{CN2} satisfies 
the discrete mass and energy conservation:  
\begin{align}
{\mathcal M}_h^n={\mathcal M}_h^0,
\quad{\mathcal E}_h^n={\mathcal E}_h^0,\quad n=1,2,...,N, \label{ME1}
\end{align}
where the discrete mass and energy are defined as
\begin{align}\label{ME2}
{\mathcal M}_h^n:=\|u_h^n\|_{L^2}^2,\quad  
{\mathcal E}_h^n:=
\|\nabla u_h^n\|_{L^2}^2
+\frac{\alpha}{2}\|\phi_h^n\|_{L^2}^2
+\frac{\beta^2}{2}\|\nabla\phi_h^n\|_{L^2}^2.
\end{align}
\end{theorem}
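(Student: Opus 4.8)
The plan is to prove both identities in \eqref{ME1} by establishing the one-step relations $\mathcal{M}_h^n=\mathcal{M}_h^{n-1}$ and $\mathcal{E}_h^n=\mathcal{E}_h^{n-1}$ for every $n$, after which \eqref{ME1} follows at once by induction. The mechanism throughout is that the admissible test spaces permit substituting $\overline u_h^{n-\frac12}$ and $D_\tau u_h^n\in\widetilde V_h$ into \eqref{CN1} and $\overline\phi_h^{n-\frac12}\in V_h$ into \eqref{CN2}; mass conservation is then extracted from the imaginary part and energy conservation from the real part of the resulting scalar identities, the key bookkeeping being which pieces are real and which are purely imaginary.

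For the mass, I would set $v_h=\overline u_h^{n-\frac12}$ in \eqref{CN1}. The term $(\nabla\overline u_h^{n-\frac12},\nabla\overline u_h^{n-\frac12})=\|\nabla\overline u_h^{n-\frac12}\|_{L^2}^2$ is real, and since $\phi_h$ is real-valued the coupling term $(\overline\phi_h^{n-\frac12}\overline u_h^{n-\frac12},\overline u_h^{n-\frac12})=\int_\Omega\overline\phi_h^{n-\frac12}|\overline u_h^{n-\frac12}|^2\,\d{\bf x}$ is also real. Hence taking the imaginary part annihilates both, leaving only ${\bf i}(D_\tau u_h^n,\overline u_h^{n-\frac12})$. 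Expanding this product and noting that the cross terms $(u_h^n,u_h^{n-1})-(u_h^{n-1},u_h^n)$ are of the form $z-\bar z$, hence purely imaginary, one finds $\mathrm{Im}\,[{\bf i}(D_\tau u_h^n,\overline u_h^{n-\frac12})]=\tfrac1{2\tau}(\|u_h^n\|_{L^2}^2-\|u_h^{n-1}\|_{L^2}^2)$, and setting it to zero gives $\mathcal M_h^n=\mathcal M_h^{n-1}$.

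For the energy, I would instead test \eqref{CN1} with $v_h=D_\tau u_h^n$ and take the real part. The term ${\bf i}\|D_\tau u_h^n\|_{L^2}^2$ is purely imaginary and drops out. The diffusion term yields $\mathrm{Re}\,(\nabla\overline u_h^{n-\frac12},\nabla D_\tau u_h^n)=\tfrac1{2\tau}(\|\nabla u_h^n\|_{L^2}^2-\|\nabla u_h^{n-1}\|_{L^2}^2)$, again because the cross gradient terms have the form $z-\bar z$. For the coupling term, writing $(u_h^n+u_h^{n-1})(u_h^n-u_h^{n-1})^*=|u_h^n|^2-|u_h^{n-1}|^2+\big(u_h^{n-1}(u_h^n)^*-u_h^n(u_h^{n-1})^*\big)$ and discarding the last bracket as purely imaginary gives $\mathrm{Re}\,(\overline\phi_h^{n-\frac12}\overline u_h^{n-\frac12},D_\tau u_h^n)=\tfrac1{2\tau}(\overline\phi_h^{n-\frac12},|u_h^n|^2-|u_h^{n-1}|^2)$. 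Collecting these produces
\begin{align}
\|\nabla u_h^n\|_{L^2}^2-\|\nabla u_h^{n-1}\|_{L^2}^2+\big(\overline\phi_h^{n-\frac12},\,|u_h^n|^2-|u_h^{n-1}|^2\big)=0.\nonumber
\end{align}

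It remains to rewrite the coupling inner product in terms of $\phi_h$ alone, which is where \eqref{CN2} enters and which I expect to be the crux. I would write \eqref{CN2} at levels $n$ and $n-1$, subtract, and test the difference with $w_h=\overline\phi_h^{n-\frac12}$. Since $\phi_h^n,\phi_h^{n-1}$ are real, the cross terms cancel and the left-hand side collapses to $\tfrac{\alpha}{2}(\|\phi_h^n\|_{L^2}^2-\|\phi_h^{n-1}\|_{L^2}^2)+\tfrac{\beta^2}{2}(\|\nabla\phi_h^n\|_{L^2}^2-\|\nabla\phi_h^{n-1}\|_{L^2}^2)$, while the right-hand side is exactly $(\overline\phi_h^{n-\frac12},|u_h^n|^2-|u_h^{n-1}|^2)$. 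Substituting into the displayed identity cancels the coupling term and yields precisely $\mathcal E_h^n-\mathcal E_h^{n-1}=0$. The main obstacle is not any single estimate but the alignment of these contributions: the energy identity closes only because the midpoint/difference structure of Crank--Nicolson makes the coupling term from \eqref{CN1} agree termwise with the time increment of the potential energy generated by \eqref{CN2}.
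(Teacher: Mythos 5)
Your proof is correct and is essentially identical to the paper's: your test functions $\overline u_h^{n-\frac12}=\tfrac12(u_h^n+u_h^{n-1})$ and $D_\tau u_h^n=\tfrac1\tau(u_h^n-u_h^{n-1})$ are just scalar multiples of the paper's choices $v_h=u_h^n+u_h^{n-1}$ (imaginary part, mass) and $v_h=u_h^n-u_h^{n-1}$ (real part, energy), and the closing step --- differencing \eqref{CN2} at consecutive levels and testing with $w_h=\overline\phi_h^{n-\frac12}$ to match the coupling term --- is exactly the paper's argument. No substantive differences to report.
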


\begin{proof} Choosing $v_h=u_h^n+u_h^{n-1}$ in \eqref{CN1} and taking the imaginary parts, we have
\begin{align}
\|u_h^n\|_{L^2}^2=\|u_h^{n-1}\|_{L^2}^2, \nonumber
\end{align}
which implies
\begin{align}
{\mathcal M}_h^n={\mathcal M}_h^0,\quad\enspace\textrm{for}\enspace n=1,2,...,N.\label{me1}
\end{align} 
Putting $v_h=u_h^n-u_h^{n-1}$ in \eqref{CN1} and taking the real parts, one has 
\begin{align}
{\rm Im}(D_\tau u_h^n,u_h^n-u_h^{n-1}) 
-{\rm Re}(\nabla\overline u_h^{n-\frac{1}{2}},\nabla(u_h^n-u_h^{n-1})) 
-{\rm Re}(\overline\phi_h^{n-\frac{1}{2}}\overline u_h^{n-\frac{1}{2}},u_h^n-u_h^{n-1} )=0,\nonumber
\end{align}
which reduces to
\begin{align}\label{ME3}
-(\|\nabla u_h^n\|_{L^2}^2-\|\nabla u_h^{n-1}\|_{L^2}^2)= 
(\overline\phi_h^{n-\frac{1}{2}}u_h^n,u_h^n)
-(\overline\phi_h^{n-\frac{1}{2}}u_h^{n-1},u_h^{n-1}).
\end{align}
From \eqref{CN2}, it is easy to see
\begin{align} 
\alpha(\phi_h^n-\phi_h^{n-1},w_h)+\beta^2(\nabla(\phi_h^n-\phi_h^{n-1}),\nabla w_h)=(|u_h^n|^2-|u_h^{n-1}|^2,w_h).\nonumber
\end{align}
Taking  $w_h=\overline\phi_h^{n-\frac{1}{2}}$ in the above equation, one has
\begin{align}\nonumber
\frac{\alpha}{2}(\|\phi_h^n\|_{L^2}^2-\|\phi_h^{n-1}\|_{L^2}^2)
+\frac{\beta^2}{2}(\|\nabla\phi_h^n\|_{L^2}^2 -\|\nabla\phi_h^{n-1}\|_{L^2}^2)
=(|u_h^n|^2-|u_h^{n-1}|^2 ,\overline\phi_h^{n-\frac{1}{2}}),
\end{align}
which with \eqref{ME3} implies
\begin{align}
\frac{\alpha}{2}(\|\phi_h^n\|_{L^2}^2-\|\phi_h^{n-1}\|_{L^2}^2) 
+\frac{\beta ^2}{2}(\|\nabla\phi_h^n\|_{L^2}^2 
-\|\nabla\phi_h^{n-1}\|_{L^2}^2) 
+(\|\nabla u_h^n\|_{L^2}^2-\|\nabla u_h^{n-1}\|_{L^2}^2) 
= 0.\nonumber
\end{align}
Thus, we have
\begin{align}
{\mathcal E}_h^n={\mathcal E}_h^0,\quad\enspace\textrm{for }n=1,2,...,N.\label{me2}
\end{align} 

Based on \eqref{me1} and \eqref{me2}, we complete the proof of \eqref{ME1}-\eqref{ME2}
\end{proof}

\begin{remark}
The discrete conservation laws \eqref{ME1}-\eqref{ME2} yields naturally the following regularity result for the numerical solution:
\begin{align}\label{ME4}
\|u_h^n\|_{{L^2}}  
+\|\nabla u_h^n\|_{L^2}
+\|\phi_h^n\|_{L^2}
+\|\nabla\phi_h^n\|_{L^2}
\le C_1,\quad n=1,2,...,N,
\end{align}
where 
\begin{align}
C_1&=\|u_h^0\|_{L^2}
+ \sqrt{\|\nabla u_h^0\|_{L^2}^2+\frac{\alpha}{2}\|\phi_h^0\|_{L^2}^2+\frac{\beta^2}{2}\| \nabla\phi_h^0\|_{L^2}^2 }
+ \sqrt{\frac{2}{\alpha}\|\nabla u_h^0\|_{L^2}^2+\|\phi_h^0\|_{L^2}^2+\frac{\beta^2}{\alpha}\| \nabla\phi_h^0\|_{L^2}^2 }\nonumber\\
&\quad+ \sqrt{ \frac{2}{\beta^2}\|\nabla u_h^0\|_{L^2}^2+\frac{\alpha}{\beta^2}\|\phi_h^0\|_{L^2}^2+\| \nabla\phi_h^0\|_{L^2}^2}.\nonumber
\end{align}
\end{remark}

This result will be used frequently in our analysis process.

\section{Existence and uniqueness of numerical solution}

The existence and uniqueness of the solution of the scheme \eqref{CN1}-\eqref{CN2} will be established in Theorem \ref{TH4-1} and \ref{TH4-2}. In order to get these properties, we begin by introducing the following Schaefer's fixed point theorem.

\begin{lemma}\label{fixed}(Schaefer's Fixed Point Theorem) {\normalfont\cite{Evans}}
Let $B$ be a Banach space, and assume that ${\mathcal G}: B\rightarrow B$ is a continuous and compact mapping. If the set
\begin{align}
\varTheta:=\{(u,\phi)\in B:\exists\,\theta\in[0,1]\ \textrm{such that}\  (u,\phi)=\theta\,
{\mathcal G}(u,\phi)\}\nonumber
\end{align}
is bounded in $B$, then ${\mathcal G}$ has at least one fixed point.
\end{lemma}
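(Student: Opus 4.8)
The plan is to deduce the statement from Schauder's fixed point theorem by composing $\mathcal{G}$ with a radial retraction onto a large closed ball, and then exploiting the boundedness of $\varTheta$ to show the retraction is inactive at the resulting fixed point. The compactness of $\mathcal{G}$ is what makes Schauder's theorem applicable; the rest of the argument is elementary.

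First I would use the hypothesis that $\varTheta$ is bounded to fix the geometry. Since $\varTheta$ is bounded in $B$, there is a constant $R>0$ with $\|x\| < R$ for every $x\in\varTheta$. Let $\overline{B}_R := \{x\in B : \|x\|\le R\}$ denote the corresponding closed ball, which is nonempty, closed, bounded and convex. Next I would introduce the radial retraction $\rho:B\to\overline{B}_R$ defined by $\rho(x)=x$ when $\|x\|\le R$ and $\rho(x)=R\,x/\|x\|$ when $\|x\|>R$; this map is continuous (indeed Lipschitz) and sends all of $B$ into $\overline{B}_R$. I would then set $\mathcal{F}:=\rho\circ\mathcal{G}:\overline{B}_R\to\overline{B}_R$. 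Because $\mathcal{G}$ is continuous and compact and $\rho$ is continuous with bounded range, $\mathcal{F}$ is a continuous, compact self-map of the nonempty closed bounded convex set $\overline{B}_R$.

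The main obstacle, and the genuinely nontrivial topological input, is the existence of a fixed point for $\mathcal{F}$. Here I would invoke Schauder's fixed point theorem, which guarantees that a continuous compact self-map of a nonempty closed bounded convex subset of a Banach space possesses at least one fixed point $x^{*}\in\overline{B}_R$, that is, $\rho(\mathcal{G}(x^{*}))=x^{*}$. This is precisely the step in which the compactness of $\mathcal{G}$ is essential; everything surrounding it is routine.

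Finally I would remove the retraction by a contradiction argument driven by the choice of $R$. If $\|\mathcal{G}(x^{*})\|\le R$, then $\rho$ acts as the identity on $\mathcal{G}(x^{*})$, so $x^{*}=\mathcal{G}(x^{*})$ is the desired fixed point. Otherwise $\|\mathcal{G}(x^{*})\|>R$, whence $x^{*}=R\,\mathcal{G}(x^{*})/\|\mathcal{G}(x^{*})\|$, giving $\|x^{*}\|=R$ together with $x^{*}=\theta\,\mathcal{G}(x^{*})$ for $\theta=R/\|\mathcal{G}(x^{*})\|\in(0,1)$. Thus $x^{*}\in\varTheta$, and so $\|x^{*}\|<R$ by the definition of $R$, contradicting $\|x^{*}\|=R$. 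Hence the second case cannot occur, and $x^{*}$ is a fixed point of $\mathcal{G}$, which completes the argument.
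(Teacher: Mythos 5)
The paper does not prove this lemma itself; it is quoted verbatim from the cited reference \cite{Evans}, where it is established exactly as you propose. Your argument is correct and coincides with that standard proof: apply Schauder's fixed point theorem to the radially retracted map $\rho\circ\mathcal{G}$ on a closed ball $\overline{B}_R$ chosen so that $\|x\|<R$ strictly on $\varTheta$, then rule out the case $\|\mathcal{G}(x^*)\|>R$ because it would place $x^*\in\varTheta$ with $\|x^*\|=R$, a contradiction.
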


\begin{theorem}\label{TH4-1}
For any given $\tau>0$ and $h>0$, there exists a solution $(u_h^n,\phi_h^n)\ (n=1,2,...,N)$ to the scheme \eqref{CN1}-\eqref{CN2}.
\end{theorem}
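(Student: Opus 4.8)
The plan is to apply Schaefer's fixed point theorem (Lemma \ref{fixed}) to establish existence for each time step $n$ by induction. Assume $(u_h^{n-1},\phi_h^{n-1})$ is known. The main subtlety is that the coupled system \eqref{CN1}-\eqref{CN2} involves $\phi_h^n$ defined through $u_h^n$ via the elliptic relation \eqref{CN2}, so I would first reduce the problem to a single equation for $u_h^n$. Specifically, for a given $u_h^n$, equation \eqref{CN2} uniquely determines $\phi_h^n$: the bilinear form $\alpha(\cdot,\cdot)+\beta^2(\nabla\cdot,\nabla\cdot)$ is coercive on $V_h$ (using the Poincar\'e inequality when $\alpha=0,\beta=1$, and directly when $\alpha,\beta\neq 0$), so by Lax--Milgram there is a bounded linear solution operator $u_h^n\mapsto\phi_h^n$. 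Substituting this into \eqref{CN1} yields a nonlinear equation in $u_h^n$ alone on the finite-dimensional space $\widetilde V_h$.

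Next I would define the fixed-point map $\mathcal G$ on $B=\widetilde V_h$. Given $w\in\widetilde V_h$, let $\psi\in V_h$ solve \eqref{CN2} with $u_h^n$ replaced by $w$, and then let $\mathcal G(w)\in\widetilde V_h$ be the unique solution $z$ of the linear problem obtained by freezing the nonlinear coefficient: for all $v_h\in\widetilde V_h$,
\begin{align}
{\bf i}\Big(\frac{z-u_h^{n-1}}{\tau},v_h\Big)
-\frac14\big(\nabla(z+u_h^{n-1}),\nabla v_h\big)
-\frac14\big(\overline\psi^{\,n-\frac12}(z+u_h^{n-1}),v_h\big)=0,\nonumber
\end{align}
where $\overline\psi^{\,n-\frac12}=\tfrac12(\psi+\phi_h^{n-1})$. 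The operator $z\mapsto$ (left-hand side) is linear and invertible on $\widetilde V_h$ for $\tau$ small (the $\tfrac{\bf i}{\tau}$ mass term dominates), so $\mathcal G$ is well-defined. Since $\widetilde V_h$ is finite-dimensional, continuity of $\mathcal G$ follows from continuity of $w\mapsto\psi$ (linear and bounded) and of the coefficient freezing, and compactness is automatic because $B$ is finite-dimensional (bounded sets are precompact).

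The crux is verifying that the set $\varTheta$ of solutions to $(u,\phi)=\theta\,\mathcal G(u,\phi)$ is bounded uniformly in $\theta\in[0,1]$. A solution $z=\theta\mathcal G(z)$ satisfies the same equation as above but with the right-hand coupling and the $u_h^{n-1}$ terms scaled by $\theta$; concretely it solves a $\theta$-weighted version of \eqref{CN1}-\eqref{CN2}. Here I would mimic the mass-conservation argument of Theorem \ref{energy}: testing the $z$-equation with $v_h=z+u_h^{n-1}$ (or the appropriate combination) and taking imaginary parts makes the Laplacian and potential terms real, so they drop out, leaving an identity that controls $\|z\|_{L^2}$ in terms of $\|u_h^{n-1}\|_{L^2}$ independently of $\theta$. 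Crucially the potential term is Hermitian-symmetric (since $\overline\psi^{\,n-1/2}$ is real-valued), so it contributes nothing to the imaginary part regardless of the size of $\psi$; this is what prevents the nonlinearity from spoiling the a priori bound. This uniform $L^2$ bound shows $\varTheta$ is bounded, so Lemma \ref{fixed} gives a fixed point $u_h^n$, and the associated $\phi_h^n$ from \eqref{CN2} completes the existence proof. I expect the main obstacle to be organizing the reduction so that the frozen-coefficient linear problem is genuinely solvable for small $\tau$ and so that the imaginary-part test applies cleanly to the $\theta$-scaled equation; once those are in place the boundedness of $\varTheta$ is a direct analogue of the discrete mass conservation already proved.
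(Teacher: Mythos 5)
Your proposal follows the paper at the top level (Schaefer's theorem, Lemma \ref{fixed}, applied to a frozen-coefficient map, with compactness free from finite-dimensionality), but your fixed-point map is genuinely different from the paper's. The paper keeps the pair $(u,\phi)$ as the fixed-point variable: its map freezes the \emph{entire} nonlinear term $(\overline\phi_h^{n-\frac12}\overline u_h^{n-\frac12},v_h)$ at the input, so both linear solves defining $\mathcal G$ are decoupled, constant-coefficient problems. The cost appears in Step 3: in the paper's $\theta$-equation the nonlinear term reads $\frac{\theta}{4}((\phi_{h,*}^n+\phi_h^{n-1})(u_{h,*}^n+u_h^{n-1}),v_h)$, whose $u$-factor is not $\theta$-scaled, so the mass-conservation test does not close; the paper instead tests \eqref{bounded1} with $u_{h,*}^n-u_h^{n-1}$, tests \eqref{bounded2} with $\phi_{h,*}^n+\phi_h^{n-1}$, and combines real and imaginary parts with Young's inequality to bound $\varTheta$ in $H^1$. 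You instead eliminate $\phi$ through the Lax--Milgram solution operator of \eqref{CN2} and freeze only the real-valued potential coefficient, leaving the unknown $z$ inside the nonlinear term. This buys a much shorter Step 3: in your $\theta$-scaled equation every term carries the combination $z+\theta u_h^{n-1}$, so testing with $v_h=z+\theta u_h^{n-1}$ (this is the ``appropriate combination'' you allude to --- $z+u_h^{n-1}$ would not make the gradient term real) and taking imaginary parts yields exactly $\|z\|_{L^2}=\theta\|u_h^{n-1}\|_{L^2}\le\|u_h^{n-1}\|_{L^2}$, and equivalence of norms on the finite-dimensional space $\widetilde V_h$ upgrades this to boundedness of $\varTheta$. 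Both routes are sound; yours is cleaner in Step 3, while the paper's avoids the reduction step and any variable-coefficient linear solve.

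There is, however, one defect that must be repaired, because as written your argument proves a weaker statement than Theorem \ref{TH4-1}. The theorem asserts existence for \emph{any} $\tau>0$ (a time-step restriction enters only in the uniqueness result, Theorem \ref{TH4-2}), yet you justify solvability of the frozen linear problem --- hence well-definedness of $\mathcal G$ --- by a perturbation argument valid only ``for $\tau$ small.'' The smallness is unnecessary: since $\psi$ and $\phi_h^{n-1}$ lie in the real space $V_h$, the coefficient $\overline\psi^{\,n-\frac12}$ is real-valued, so for the associated homogeneous problem, testing with $z$ and taking imaginary parts gives
\begin{align}
\frac{1}{\tau}\|z\|_{L^2}^2
={\rm Im}\Big[\tfrac12\|\nabla z\|_{L^2}^2+\tfrac12\big(\overline\psi^{\,n-\frac12}z,z\big)\Big]=0,\nonumber
\end{align}
hence $z=0$, and a square homogeneous linear system with trivial kernel is invertible for every $\tau>0$. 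Replace the dominance argument by this (it is the same trick you already use for $\varTheta$). Two smaller slips: the map $w\mapsto\psi$ is not linear --- its source is $|w|^2$, so only the elliptic solution operator is linear and the composite is quadratic, which is still continuous and is all you need; and your factors $\tfrac14$ should be $\tfrac12$ given your definition $\overline\psi^{\,n-\frac12}=\tfrac12(\psi+\phi_h^{n-1})$, since otherwise a fixed point of $\mathcal G$ solves a scheme with halved Laplacian and potential rather than \eqref{CN1}.
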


\begin{proof}  
Firstly, we define map ${\mathcal G}:\widetilde{V}_h\times V_h\rightarrow \widetilde{V}_h\times V_h$ by 
\begin{align}
{\mathcal G}(u_h^n,\phi_h^n)=(u_{h,*}^n,\phi_{h,*}^n),
\quad{\forall\,(u_h^n,\phi_h^n)\in \widetilde{V}_h\times V_h}, \nonumber
\end{align}
where $(u_{h,*}^n,\phi_{h,*}^n)\in \widetilde{V}_h\times V_h$ satisfies the following equations: For given $u_h^{n-1}$, $\phi_h^{n-1}$, $u_h^n$ and $\phi_h^n$,
\begin{align}
&
{\bf i}\,\Big(\frac{u_{h,*}^n-u_h^{n-1}}{\tau},v_h\Big)
-\frac{1}{2}(\nabla u_{h,*}^n,\nabla v_h)
=\frac{1}{2}(\nabla u_h^{n-1},\nabla v_h)
+(\overline\phi_h^{n-\frac{1}{2}}\overline u_h^{n-\frac{1}{2}},v_h),\quad\forall\, v_h\in\widetilde{V}_h,  \label{*1}\\
&
\alpha(\phi_{h,*}^n,w_h)+\beta^2(\nabla\phi_{h,*}^n,\nabla w_h)
=(|u_h^n|^2,w_h),\quad\forall\, w_h\in V_h. \label{*2} 
\end{align}

Next, we will prove the map ${\mathcal G}$ satisfies the three conditions of Lemma \ref{fixed}. Thus, the map ${\mathcal G}$ has a fixed point which is a solution of the scheme \eqref{CN1}-\eqref{CN2}.\\[0.1pt]

{\bf Step 1. Well-defined map}\\ \vspace{-0.3cm}

Given $u_h^n=0$, $u_h^{n-1}=0$, $\phi_h^n=0$, $\phi_h^{n-1}=0$ in \eqref{*1}-\eqref{*2}, we have
\begin{align}
&
\frac{\bf i}{\tau}(u_{h,*}^n,v_h)-\frac{1}{2}(\nabla u_{h,*}^n,\nabla v_h)=0, \qquad\forall\, v_h\in \widetilde{V}_h,\nonumber\\
&
\alpha(\phi_{h,*}^n,w_h)+\beta^2(\nabla\phi_{h,*}^n,\nabla w_h)=0, \,\quad\forall\, w_h\in V_h.\nonumber
\end{align}
Putting $(v_h,w_h)=(u_{h,*}^n,\phi_{h,*}^n)$ in the above equations, we obtain
\begin{align}
\|u_{h,*}^n\|_{L^2}^2=\|\nabla u_{h,*}^n\|_{L^2}^2
=\|\phi_{h,*}^n\|_{L^2}^2=\|\nabla\phi_{h,*}^n\|_{L^2}^2=0.\nonumber
\end{align}
Therefore, the map ${\mathcal G}$ is well-defined.\\[0.1pt]

{\bf Step 2. Continuous and compact}\\ \vspace{-0.3cm}

In order to prove the continuity of the map ${\mathcal G}$, we let 
\begin{align}
{\mathcal G}(\widehat u_h^n,\widehat\phi_h^n)=(\widehat u_{h,*}^n,\widehat\phi_{h,*}^n) 
\quad\textrm{and}\quad 
{\mathcal G}(\mathring u_h^n,\mathring\phi_h^n)=(\mathring u_{h,*}^n,\mathring\phi_{h,*}^n),\nonumber
\end{align}
where $(\widehat u_h^n,\widehat\phi_h^n)\in\widetilde{V}_h\times V_h$ and $(\mathring u_h^n,\mathring\phi_h^n)\in\widetilde{V}_h\times V_h$.\\\vspace{-0.3cm}

Next, we prove that when 
 $\widehat u_h^n\rightarrow \mathring u_h^n$ and $\widehat\phi_h^n\rightarrow\mathring\phi_h^n$,
the following results hold:
\begin{align}
\widehat u_{h,*}^n\rightarrow\mathring u_{h,*}^n\quad\textrm{and} \quad\widehat\phi_{h,*}^n\rightarrow\mathring\phi_{h,*}^n.\label{convergence result}
\end{align}

Since all norms are equivalent in the finite dimensional spaces $\widetilde{V}_h$ and $V_h$, we want to prove \eqref{convergence result} holds, as long as 
\begin{align}
\|\widehat u_{h,*}^n-\mathring u_{h,*}^n\|_{H^1}\rightarrow 0
\quad\textrm{and}\quad\|\widehat\phi_{h,*}^n-\mathring\phi_{h,*}^n\|_{H^1} \rightarrow 0.\nonumber
\end{align}

Next, we will demonstrate these results. By the definition of the map ${\mathcal G}$, we get
\begin{align}
&
\frac{\bf i}{\tau}(\widehat u_{h,*}^n-\mathring u_{h,*}^n,v_h)
-\frac{1}{2}(\nabla(\widehat u_{h,*}^n-\mathring u_{h,*}^n),\nabla v_h)\nonumber\\
&\quad=
\frac{1}{4}((\widehat\phi_h^n+\phi_h^{n-1})
(\widehat u_h^n+u_h^{n-1})
-(\mathring\phi_{h}^n+\phi_h^{n-1})
(\mathring u_h^n+u_h^{n-1}),v_h),\label{*5}\\
&
\alpha (\widehat\phi_{h,*}^n-\mathring \phi_{h,*}^n,w_h)
+\beta^2(\nabla(\widehat\phi_{h,*}^n-\mathring\phi_{h,*}^n),\nabla w_h)
=(|\widehat u_h^n|^2-|\mathring u_{h}^n|^2,w_h)\label{*6} 
\end{align}
for any $v_h\in\widetilde{V}_h,\,w_h\in V_h$.\\
Substituting $v_h=\widehat u_{h,*}^n-\mathring u_{h,*}^n$ into \eqref{*5}, we obtain
\begin{align}
&
\frac{\bf i}{\tau}\|\widehat u_{h,*}^n-\mathring u_{h,*}^n\|_{L^2}^2
-\frac{1}{2}\|\nabla(\widehat u_{h,*}^n-\mathring u_{h,*}^n)\|_{L^2}^2\nonumber\\
&=
\frac{1}{4}((\widehat\phi_h^n+\phi_h^{n-1}) 
(\widehat u_h^n-\mathring u_h^n),\widehat u_{h,*}^n-\mathring u_{h,*}^n)
+\frac{1}{4}((\widehat\phi_h^n-\mathring\phi_h^n)
(\mathring u_h^n+u_h^{n-1}),\widehat u_{h,*}^n-\mathring u_{h,*}^n).\label{*6.5} 
\end{align}

Taking the real parts and imaginary parts of the equation \eqref{*6.5} respectively, and then adding them together, we get
\begin{align}
&
\frac{1}{\tau}\|\widehat u_{h,*}^n-\mathring u_{h,*}^n\|_{L^2}^2+
\frac{1}{2}\|\nabla(\widehat u_{h,*}^n-\mathring u_{h,*}^n)\|_{L^2}^2
\nonumber\\
&\le 
C(\|\widehat\phi_h^n+\phi_h^{n-1}\|_{L^2}
\|\widehat u_h^n-\mathring u_h^n\|_{L^4}
+\|\widehat\phi_h^n-\mathring \phi_h^n\|_{L^4}
\|\mathring u_h^n+u_h^{n-1}\|_{L^2})
\|\widehat u_{h,*}^n-\mathring u_{h,*}^n\|_{L^4}\nonumber\\
&\le
\varepsilon\|\nabla(\widehat u_{h,*}^n-\mathring u_{h,*}^n)\|_{L^2}^2
+C\|\widehat\phi_h^n+\phi_h^{n-1}\|_{L^2}^2 
\|\widehat u_h^n-\mathring u_h^n\|_{L^4}^2
+C\|\widehat\phi_h^n-\mathring\phi_h^n\|_{L^4}^2
\|\mathring u_h^n+u_h^{n-1}\|_{L^2}^2.\nonumber
\end{align}

Since $u_h^{n-1}$, $\mathring u_h^n$ $\in \widetilde{V}_h\hookrightarrow L^2(\Omega)$ and 
$\phi_h^{n-1}$ , $\widehat\phi_h^n$ $\in V_h\hookrightarrow L^2(\Omega)$, then $\|\widehat\phi_h^n+\phi_h^{n-1}\|_{L^2}$ and $\|\mathring u_h^n+u_h^{n-1}\|_{L^2}$ are boundedness, which with the above inequality lead to
\begin{align}
\frac{1}{\tau}\|\widehat u_{h,*}^n-\mathring u_{h,*}^n\|_{L^2}^2+\frac{1}{2}\|\nabla(\widehat u_{h,*}^n-\mathring u_{h,*}^n)\|_{L^2}^2
\le C\|\widehat u_h^n-\mathring u_h^n\|_{L^4}^2
+C\|\widehat\phi_h^n-\mathring\phi_h^n\|_{L^4}^2.\nonumber
\end{align}
Thus, when $\widehat u_h^n\rightarrow\mathring u_h^n$ and $\widehat\phi_h^n\rightarrow\mathring\phi_h^n $, we get
\begin{align}
\|\widehat u_{h,*}^n-\mathring u_{h,*}^n\|_{H^1}
\le\|\widehat u_{h,*}^n-\mathring u_{h,*}^n\|_{L^2}
+\|\nabla(\widehat u_{h,*}^n-\mathring u_{h,*}^n)\|_{L^2}\rightarrow 0.\label{continous1}
\end{align}

Substituting $w_h=\widehat\phi_{h,*}^n-\mathring\phi_{h,*}^n$ into \eqref{*6} yields
\begin{align}
&
\alpha\|\widehat\phi_{h,*}^n-\mathring\phi_{h,*}^n\|_{L^2}^2
+\beta^2\|\nabla(\widehat\phi_{h,*}^n-\mathring\phi_{h,*}^n)\|_{L^2}^2\nonumber\\
&
=(|\widehat u_h^n|^2-|\mathring u_h^n|^2,\widehat\phi_{h,*}^n-\mathring\phi_{h,*}^n)\nonumber\\
&
=((|\widehat u_h^n|+|\mathring u_h^n|)(|\widehat u_{h}^n|-|\mathring u_h^n|),\widehat\phi_{h,*}^n-\mathring\phi_{h,*}^n)\nonumber\\
&
\le
(\|\widehat u_h^n\|_{L^4}+\|\mathring u_h^n\|_{L^4})\|\widehat u_{h}^n-\mathring u_h^n\|_{L^4}\|\widehat\phi_{h,*}^n-\mathring\phi_{h,*}^n\|_{L^2}\nonumber\\
&
\le C\|\widehat u_{h}^n-\mathring u_h^n\|_{L^4}^2
+\varepsilon\alpha
\|\widehat\phi_{h,*}^n-\mathring\phi_{h,*}^n\|_{L^2}^2. \quad\mbox{(use $\widehat u_h^n$ and $\mathring u_h^n\in\widetilde{V}_h\hookrightarrow L^4$)}\nonumber
\end{align}
Thus, choosing $\varepsilon$ to be a sufficiently small constant, we obtain
\begin{align*}
&
\alpha\|\widehat\phi_{h,*}^n-\mathring\phi_{h,*}^n\|_{L^2}^2
+\beta^2\|\nabla(\widehat\phi_{h,*}^n-\mathring\phi_{h,*}^n)\|_{L^2}^2
\le C\|\widehat u_h^n-\mathring u_h^n\|_{L^4}^2\rightarrow 0
\end{align*}
as $\widehat u_h^n\rightarrow\mathring u_h^n$, which leads to 
\begin{align}
\|\widehat\phi_{h,*}^n-\mathring\phi_{h,*}^n\|_{H^1}
\le \|\widehat\phi_{h,*}^n-\mathring\phi_{h,*}^n\|_{L^2}
+\|\nabla(\widehat\phi_{h,*}^n-\mathring\phi_{h,*}^n)\|_{L^2}\rightarrow 0\label{continous2}
\end{align}
as $\widehat u_h^n\rightarrow\mathring u_h^n $.

Combining \eqref{continous1} and \eqref{continous2}, we complete the proof of the continuity of the map ${\mathcal G}$. Additionally, since the FE space $\widetilde{V}_h\times V_h$ is finite-dimensional, it follows that ${\mathcal G}$ is a compact map.
\\[2pt]

 {\bf Step 3. Uniform boundedness of the set $\varTheta$} \\ \vspace{-0.3cm}
 
Next, we prove the uniform boundedness of the set $\varTheta$:
\begin{align}
\|u_{h,*}^n\|_{H^1} +\|\phi_{h,*}^n\|_{H^1} \le M,\quad\forall\,(u_{h,*}^n,\phi_{h,*}^n) \in \varTheta, \nonumber
\end{align}
where $M$ is a positive constant independent of $u_{h,*}^n$ and $\phi_{h,*}^n$.

For any $(u_{h,*}^n,\phi_{h,*}^n)\in\varTheta$, we have
\begin{align}
\theta{\mathcal G}(u_{h,*}^n,\phi_{h,*}^n)=(u_{h,*}^n,\phi_{h,*}^n),\quad\theta\in[0,1],\nonumber
\end{align}
that is 
\begin{align}
&
\frac{\bf i}{\tau}(u_{h,*}^n-\theta u_h^{n-1},v_h)-\frac{1}{2}(\nabla u_{h,*}^n,\nabla v_h)\nonumber\\
&
\qquad=\frac{\theta}{2}(\nabla u_{h}^{n-1},\nabla v_h)+\frac{\theta}{4}((\phi_{h,*}^n+\phi_h^{n-1})(u_{h,*}^n+u_h^{n-1}),v_h),\label{bounded1}\\
&
\alpha(\phi_{h,*}^n,w_h)+\beta^2(\nabla\phi_{h,*}^n,\nabla w_h)=\theta(|u_{h,*}^n|^2,w_h) \label{bounded2}
\end{align}
for any $v_h\in\widetilde{V}_h,\, w_h\in V_h$.


Substituting $w_h=\phi_{h,*}^n+\phi_h^{n-1}$ into \eqref{bounded2}
, we obtain
\begin{align}
\theta(|u_{h,*}^n|^2-|u_h^{n-1}|^2,\phi_{h,*}^n+\phi_h^{n-1} )
&
=-\theta(|u_h^{n-1}|^2,\phi_{h,*}^n+\phi_h^{n-1})+\alpha\|\phi_{h,*}^n\|_{L^2}^2+\beta^2\|\nabla\phi_{h,*}^n\|_{L^2}^2\nonumber\\
&\quad +\alpha\|\phi_{h,*}^n\|_{L^2}\|\phi_h^{n-1}\|_{L^2}+\beta^2\|\nabla\phi_{h,*}^n\|_{L^2}\|\nabla\phi_h^{n-1}\|_{L^2}^2\label{bounded2.5}
\end{align}
Further, taking $v_h=u_{h,*}^n-u_h^{n-1}$ in \eqref{bounded1}, we have
\begin{align}
&
\frac{\bf i}{2}(\|u_{h,*}^n\|_{L^2}^2-\|u_h^{n-1}\|_{L^2}^2+\|u_{h,*}^n-u_h^{n-1}\|_{L^2}^2)\nonumber\\
&
\quad-\frac{\tau}{4}(\|\nabla u_{h,*}^n\|_{L^2}^2-\|\nabla u_h^{n-1}\|_{L^2}^2+\|\nabla(u_{h,*}^n-u_h^{n-1})\|_{L^2}^2)\nonumber\\
&
={\bf i}\theta(u_h^{n-1},u_{h,*}^n-u_h^{n-1})+\frac{\theta\tau}{2}(\nabla u_h^{n-1},\nabla(u_{h,*}^n-u_h^{n-1}))\nonumber\\
&
\quad+\frac{\theta\tau}{4}(\phi_{h,*}^n+\phi_h^{n-1},|u^n_{h,*}|^2-|u_h^{n-1}|^2)\nonumber\\
&
={\bf i}\theta(u_h^{n-1},u_{h,*}^n-u_h^{n-1})+\frac{\theta\tau}{2}(\nabla u_h^{n-1},\nabla(u_{h,*}^n-u_h^{n-1}))\nonumber\\
&\quad
+\frac{\alpha\tau}{4}\|\phi_{h,*}^n\|_{L^2}^2+\frac{\beta^2\tau}{4}\|\nabla\phi_{h,*}^n\|_{L^2}^2-\frac{\theta\tau}{4}(|u_h^{n-1}|^2,\phi_{h,*}^n+\phi_h^{n-1})\nonumber\\
&\quad
+\frac{\alpha\tau}{4}\|\phi_{h,*}^n\|_{L^2}\|\phi_h^{n-1}\|_{L^2}+\frac{\beta^2\tau}{4}\|\nabla\phi_{h,*}^n\|_{L^2}\|\nabla\phi_h^{n-1}\|_{L^2}^2,\label{bounded3}
\end{align}
where we use \eqref{bounded2.5} to get the last equality.

Taking the real parts of \eqref{bounded3}, we get
\begin{align}
&
\frac{\tau}{4}\|\nabla u_{h,*}^n\|_{L^2}^2+\frac{\alpha\tau}{4}\|\phi_{h,*}^n\|_{L^2}^2 +\frac{\beta^2\tau}{4}\|\nabla\phi_{h,*}^n\|_{L^2}^2+\|\nabla(u^n_{h,*}-u^{n-1}_h)\|^2_{L^2}\nonumber \\
&
=\frac{\tau}{4}\|\nabla u^{n-1}_h\|^2_{L^2}
-\theta{\rm Im}(u_h^{n-1},u_{h,*}^n-u_h^{n-1})-\frac{\theta\tau}{2}{\rm Re}(\nabla u_h^{n-1},\nabla(u_{h,*}^n-u_h^{n-1}))\nonumber\\
&\quad
+\frac{\theta\tau}{4}(|u_h^{n-1}|^2,\phi_{h,*}^n+\phi_h^{n-1})-\frac{\alpha\tau}{4}\|\phi_{h,*}^n\|_{L^2}\|\phi_h^{n-1}\|_{L^2}-\frac{\beta^2\tau}{4}\|\nabla\phi_{h,*}^n\|_{L^2}\|\nabla\phi_h^{n-1}\|_{L^2}
\nonumber\\
&\le
(\frac{\tau}{4}+\frac{\theta\tau}{2})\|\nabla u^{n-1}_h\|^2_{L^2}+\theta\|u^{n-1}_h\|_{L^2}\|u^n_{h,*}\|_{L^2}+\frac{\theta\tau}{2}\|\nabla u^{n-1}_h\|_{L^2}\|\nabla u^n_{h,*}\|_{L^2}
\nonumber\\
&\quad
+\frac{C\theta\tau}{4}\|u^{n-1}_h\|_{L^2}\|\nabla u^{n-1}_h\|_{L^2}(\|\nabla 
 \phi^n_{h,*}\|_{L^2}+\|\nabla\phi^{n-1}_h\|_{L^2})\nonumber\\
&\quad
+\frac{\alpha\tau}{8}(\|\phi^n_{h,*}\|_{L^2}^2+\|\phi^{n-1}_h\|^2_{L^2})+\frac{\beta^2\tau}{4}(\frac{1}{4}\|\nabla\phi^n_{h,*}\|^2_{L^2}+\|\nabla\phi^{n-1}_h\|^2_{L^2})\nonumber\\
&\le
(\frac{\tau}{4}+\frac{3\theta\tau}{2})\|\nabla u^{n-1}_h\|^2_{L^2}+2\theta\|u^{n-1}_h\|^2_{L^2}+\frac{\theta}{8}\|u^n_{h,*}\|^2_{L^2}+\frac{\theta\tau}{16}\|\nabla u^n_{h,*}\|^2_{L^2}     \nonumber\\
&\quad
+\frac{\theta\tau}{4}\Big(\frac{C}{\beta^2}\|u^{n-1}_h\|^2_{L^2}\|\nabla u^{n-1}_h\|^2_{L^2}+\frac{\beta^2}{4}\Big(\|\nabla \phi^n_{h,*}\|^2_{L^2}+\|\nabla \phi^{n-1}_h\|_{L^2}^2\Big)\Big)    \nonumber\\
&\quad
+\frac{\alpha\tau}{8}\|\phi^n_{h,*}\|^2_{L^2}+\frac{\beta^2\tau}{16}\|\nabla\phi^n_{h,*}\|^2_{L^2}+\frac{\alpha\tau}{8}\|\phi^{n-1}_h\|^2_{L^2}+\frac{\beta^2\tau}{4}\|\nabla\phi^{n-1}_h\|^2_{L^2}\nonumber\\
&\le 
(\frac{\tau}{4}+\frac{3\theta\tau}{2})\|\nabla u^{n-1}_h\|^2_{L^2}+2\theta\|u^{n-1}_h\|^2_{L^2}+\frac{\theta}{8}\|u^n_{h,*}\|^2_{L^2}+\frac{\theta\tau}{16}\|\nabla u^n_{h,*}\|^2_{L^2}+C\tau\|u^{n-1}_h\|^2_{L^2}\|\nabla u^{n-1}_h\|^2_{L^2} \nonumber\\
&\quad+\frac{\alpha\tau}{8}\|\phi^n_{h,*}\|^2_{L^2}+\frac{(1+\theta)\beta^2\tau}{16}\|\nabla\phi^n_{h,*}\|_{L^2}+\frac{\alpha\tau}{8}\|\phi^{n-1}_h\|^2_{L^2}+\frac{(4+\theta)\beta^2\tau}{16}\|\nabla\phi^{n-1}_h\|^2_{L^2}
.\label{bounded3.1}
\end{align}

Moreover, taking the imaginary parts of \eqref{bounded3}, we get
\begin{align}
\frac{1}{2} \|u_{h,*}^n\|_{L^2}^2
&
\le\frac{1}{2}\|u_h^{n-1}\|_{L^2}^2+\left|{\rm Re}\left(\theta(u_h^{n-1},u_{h,*}^n-u_h^{n-1})\right)
+{\rm Im}\Big(\frac{\theta\tau}{2}(\nabla u_h^{n-1},\nabla(u_{h,*}^n-u_h^{n-1})) \Big)\right|\nonumber\\
&\le
(\frac12+\theta)\|u_h^{n-1}\|_{L^2}^2 +\theta\|u_h^{n-1}\|_{L^2}\|u_{h,*}^{n}\|_{L^2} +\frac{\theta\tau}{2}\|\nabla u_h^{n-1}\|_{L^2}\|\nabla u_{h,*}^{n}\|_{L^2}\nonumber\\
&\le
(\frac12+3\theta)\|u_h^{n-1}\|_{L^2}^2+\frac{\theta}{8}\|u_{h,*}^{n}\|_{L^2}^2+\theta\tau\|\nabla u_h^{n-1}\|_{L^2}^2+\frac{\theta\tau}{16}\|\nabla u_{h,*}^{n}\|_{L^2}^2.\label{bounded3.2}
\end{align}
 
Adding \eqref{bounded3.1} and \eqref{bounded3.2}, we have
\begin{align}
&
\frac{1}{2}\|u_{h,*}^n\|_{L^2}^2+\frac{\tau}{4}\|\nabla u_{h,*}^n\|_{L^2}^2+\frac{\alpha\tau}{4}\|\phi_{h,*}^n\|_{L^2}^2+\frac{\beta^2\tau}{4}\|\nabla\phi_{h,*}^n\|_{L^2}^2\nonumber\\
&\le 
\frac{\theta}{4}\|u_{h,*}^n\|_{L^2}^2+\frac{\theta\tau}{8}\|\nabla u_{h,*}^n\|_{L^2}^2+\frac{\alpha\tau}{8}\|\phi^n_{h,*}\|^2_{L^2}+\frac{(1+\theta)\beta^2\tau}{16}\|\nabla\phi^n_{h,*}\|^2_{L^2} +\frac{10\theta+1}{2}\|u_h^{n-1}\|_{L^2}^2\nonumber\\
&
\quad
+\frac{(10\theta+1)\tau}{4}\|\nabla u_h^{n-1}\|_{L^2}^2+C\tau\|u_h^{n-1}\|_{L^2}^2\|\nabla u_h^{n-1}\|_{L^2}^2+\frac{\alpha\tau}{8}\|\phi^{n-1}_h\|^2_{L^2}+\frac{(4+\theta)\beta^2\tau}{16}\|\nabla\phi^{n-1}_h\|^2_{L^2}.\nonumber
\end{align}

Since $u_h^{n-1}\in \widetilde{V}_h\hookrightarrow H^1(\Omega)$ and $\phi_h^{n-1}\in V_h\hookrightarrow H^1(\Omega)$, then $\|u_h^{n-1}\|_{L^2}$, $\|\nabla u_h^{n-1}\|_{L^2}$, $\|\phi_h^{n-1}\|_{L^2}$ and $\|\nabla\phi_h^{n-1}\|_{L^2}$ are boundedness, which with the above inequality lead to 
\begin{align}
\|u_{h,*}^n\|^2_{H^1}
+\|\phi_{h,*}^n\|^2_{H^1}
\le\|u_{h,*}^n\|_{L^2}^2+\|\nabla u_{h,*}^n\|_{L^2}^2+\|\phi_{h,*}^n\|_{L^2}^2+\|\nabla\phi _{h,*}^n\|_{L^2}^2\le M,\nonumber
\end{align}
where $M>0$ is a constant independent of $u_{h,*}^n$ and $\phi_{h,*}^n$. Therefore, we complete the proof of the uniform boundedness of the set $\varTheta$.

Based on the above analysis, we conclude that the map ${\mathcal G}$ satisfies the three conditions in Lemma \ref{fixed}. Thus, the scheme \eqref{CN1}-\eqref{CN2} has a solution, which is a fixed point of the map ${\mathcal G}$.
\end{proof}

Next, we will prove the uniqueness of the solution to the system \eqref{CN1}-\eqref{CN2}.

\begin{theorem}\label{TH4-2}
Under the conditions of Lemma \ref{fixed}, there exists a constant $\tau^*>0$, so that when $\tau\le\tau^*$, the solution $(u_h^n,\phi_h^n)$ $(n=1,2,...,N)$ to the scheme \eqref{CN1}-\eqref{CN2} is unique.
\end{theorem}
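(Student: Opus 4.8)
The plan is to argue by induction on the time level $n$, turning the difference of two solutions into a single-step estimate that becomes a genuine contraction once $\tau$ is small. The initial pair $u_h^0,\phi_h^0$ is prescribed, so I assume $(u_h^{n-1},\phi_h^{n-1})$ is already determined and suppose $(u_1,\phi_1)$ and $(u_2,\phi_2)$ are two solutions of \eqref{CN1}-\eqref{CN2} at level $n$ sharing this common previous step. Set $e_u=u_1-u_2$ and $e_\phi=\phi_1-\phi_2$, and write $\overline{u}_2^{\,n-\frac12}=\tfrac12(u_2+u_h^{n-1})$, $\overline{\phi}_1^{\,n-\frac12}=\tfrac12(\phi_1+\phi_h^{n-1})$. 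Since the discrete conservation laws of Theorem \ref{energy} hold for \emph{every} solution of the scheme, both $(u_1,\phi_1)$ and $(u_2,\phi_2)$ obey the a priori bound \eqref{ME4}; in particular $\|u_i\|_{H^1}$ and $\|\phi_i\|_{H^1}$, hence $\|\overline{u}_2^{\,n-\frac12}\|_{L^4}$ and $\|\overline{\phi}_1^{\,n-\frac12}\|_{L^2}$, are bounded by a constant depending only on the level-$0$ data.

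Subtracting the two copies of \eqref{CN1} and of \eqref{CN2}, and noting that the common step cancels so that the midpoint differences reduce to $\tfrac12 e_u$ and $\tfrac12 e_\phi$, gives
\[
  {\bf i}\Big(\frac{e_u}{\tau},v_h\Big)-\frac12(\nabla e_u,\nabla v_h)
  -\Big(\tfrac12\,\overline{\phi}_1^{\,n-\frac12}e_u+\tfrac12\,\overline{u}_2^{\,n-\frac12}e_\phi,\,v_h\Big)=0,
\]
\[
  \alpha(e_\phi,w_h)+\beta^2(\nabla e_\phi,\nabla w_h)=(|u_1|^2-|u_2|^2,w_h),
  \qquad |u_1|^2-|u_2|^2=e_u\bar u_1+u_2\bar e_u .
\]
First I would take $v_h=e_u$. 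The imaginary part kills the Laplacian term and the real-valued term $\int\overline{\phi}_1^{\,n-\frac12}|e_u|^2$, leaving $\tau^{-1}\|e_u\|_{L^2}^2=\tfrac12\,\mathrm{Im}(\overline{u}_2^{\,n-\frac12}e_\phi,e_u)$; by the generalized H\"older inequality ($L^4\cdot L^4\cdot L^2$) and the boundedness of $\|\overline{u}_2^{\,n-\frac12}\|_{L^4}$ this yields $\|e_u\|_{L^2}\le C\tau\,\|e_\phi\|_{H^1}$, the crucial factor of $\tau$.

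The real part of the same identity gives $\|\nabla e_u\|_{L^2}^2=-\int\overline{\phi}_1^{\,n-\frac12}|e_u|^2-\mathrm{Re}(\overline{u}_2^{\,n-\frac12}e_\phi,e_u)$, and this is where I expect the main obstacle to lie: the coefficient $\|\overline{\phi}_1^{\,n-\frac12}\|_{L^2}$ of the first term is merely bounded, not small, so it cannot be absorbed into the left-hand side by a direct bound. The remedy is the Gagliardo--Nirenberg interpolation $\|e_u\|_{L^4}^2\le C\|e_u\|_{L^2}\|e_u\|_{H^1}$ in $2$D (with the corresponding $3$D exponents) followed by Young's inequality, which splits off $\tfrac14\|\nabla e_u\|_{L^2}^2$ for absorption and leaves $\|\nabla e_u\|_{L^2}^2\le C\|e_u\|_{L^2}^2+C\|e_\phi\|_{H^1}\|e_u\|_{L^2}$, with $C$ independent of $h$. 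Next I would take $w_h=e_\phi$ in the $\phi$-equation; bounding the right-hand side by $C\|e_u\|_{L^4}\|e_\phi\|_{L^2}$ and invoking coercivity of the bilinear form, directly when $\alpha\neq0$ and through Poincar\'e's inequality in the Poisson case $\alpha=0,\beta=1$, produces $\|e_\phi\|_{H^1}\le C\|e_u\|_{H^1}$.

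Finally I would combine the three estimates. Feeding $\|e_\phi\|_{H^1}\le C\|e_u\|_{H^1}$ into the real-part bound and using Young's inequality gives $\|\nabla e_u\|_{L^2}\le C\|e_u\|_{L^2}$, whence $\|e_\phi\|_{H^1}\le C\|e_u\|_{L^2}$ as well. Substituting the latter into the imaginary-part bound yields $\|e_u\|_{L^2}\le C^{*}\tau\,\|e_u\|_{L^2}$. Choosing $\tau^{*}=1/(2C^{*})$ forces $\|e_u\|_{L^2}=0$ whenever $\tau\le\tau^{*}$, and then $\nabla e_u$ and $e_\phi$ vanish as well, closing the induction step and establishing uniqueness. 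The interpolation-plus-absorption treatment of $\|\nabla e_u\|_{L^2}^2$ is the decisive point, since it removes the apparent obstruction caused by the non-small potential coefficient and keeps the threshold $\tau^{*}$ independent of $h$.
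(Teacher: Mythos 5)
Your proposal is correct and takes essentially the same approach as the paper: both proofs subtract two solutions sharing the common previous time level, invoke the a priori $H^1$ bound \eqref{ME4} coming from the discrete conservation laws, test the $u$-equation with the difference $e_u$ (imaginary part producing the crucial factor of $\tau$, real part controlling $\|\nabla e_u\|_{L^2}$ after absorbing an $\varepsilon\|\nabla e_u\|_{L^2}^2$ term), test the $\phi$-equation with $e_\phi$ for coercivity, and conclude by a contraction argument with $\tau^*$ independent of $h$. The only differences are bookkeeping: the paper closes the loop on $\|\nabla\widetilde\phi_h^n\|_{L^2}$ via $\|\widetilde u_h^n\|_{L^2}^2\le C\tau\|\nabla\widetilde\phi_h^n\|_{L^2}^2$ and uses $L^6\cdot L^3\cdot L^2$ H\"older with Sobolev embedding, while you close it on $\|e_u\|_{L^2}$ and use Gagliardo--Nirenberg interpolation --- equivalent reshufflings of the same estimates.
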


\begin{proof}
Let ${(u_{h,1}^n,\phi_{h,1}^n)}$ and ${(u_{h,2}^n,\phi_{h,2}^n)}$ be two FE solutions of the scheme \eqref{CN1}-\eqref{CN2}. we denote
\begin{align}
\widetilde u_h^n=u_{h,1}^n-u_{h,2}^n\quad\textrm{and}
\quad\widetilde\phi_h^n=\phi_{h,1}^n-\phi_{h,2}^n.\nonumber
\end{align}
From \eqref{CN1}-\eqref{CN2}, we have  
\begin{align}
&
\frac{\bf i}{\tau}(\widetilde u_h^n ,v_h) -\frac{1}{2}(\nabla\widetilde u_h^n,\nabla v_h)\nonumber\\
&
=\frac{1}{4}((\phi_{h,1}^n+\phi_h^{n-1})(u_{h,1}^n+u_h^{n-1})-(\phi_{h,2}^n+\phi_h^{n-1})(u_{h,2}^n+u_h^{n-1}),v_h), \quad v_h\in\widetilde{V}_h\label{unique1}\\
&
\alpha(\widetilde\phi_h^n,w_h)+\beta^2(\nabla\widetilde\phi_h^n,\nabla w_h)=(|u_{h,1}^n|^2-|u_{h,2}^n|^2,w_h),\quad w_h\in V_h.
\label{unique2}
\end{align}
Substituting $v_h=\widetilde u_h^n$ into \eqref{unique1}, we obtain
\begin{align}
&
\frac{\bf i}{\tau}\|\widetilde u_h^n\|_{L^2}^2-\frac{1}{2}\|\nabla\widetilde u_h^n\|_{L^2}^2\nonumber \\
&
=\frac{1}{4}((\phi_{h,1}^n+\phi_h^{n-1})(u_{h,1}^n+u_h^{n-1})-(\phi_{h,2}^n+\phi_h^{n-1} )(u_{h,2}^n+u_h^{n-1}),\widetilde u_h^n)\nonumber\\
&
=\frac{1}{4}(\widetilde\phi_h^n u_{h,1}^n,\widetilde u_h^n)+\frac{1}{4}(\widetilde\phi_h^n u_h^{n-1},\widetilde u_h^n)+\frac{1}{4}(\phi_{h,2}^n\widetilde u_{h}^n,\widetilde u_h^n)+\frac{1}{4}(\phi_h^{n-1}\widetilde u_h^n,\widetilde u_h^n).\label{unique3}
\end{align}
By taking the imaginary parts of \eqref{unique3}, we obtain
\begin{align}
\frac{1}{\tau}\|\widetilde u_h^n\|_{L^2}^2 
&=
\frac{1}{4}{\rm Im} 
\left((\widetilde\phi_h^n u_{h,1}^n,\widetilde u_h^n)+(\widetilde\phi_h^n u_h^{n-1},\widetilde u_h^n)+(\phi_{h,2}^n \widetilde u_h^n,\widetilde u_h^n)+(\phi_h^{n-1}\widetilde u_h^n,\widetilde u_h^n)\right)\nonumber\\
&=
\frac{1}{4}{\rm Im}\left((\widetilde\phi_h^n u_{h,1}^n,\widetilde u_h^n)+(\widetilde\phi_h^n u_h^{n-1},\widetilde u_h^n)\right)\nonumber\\
&\le
C\|\widetilde\phi_h^n\|_{L^6}\|u_{h,1}^n\|_{L^3}\|\widetilde u_h^n\|_{L^2}+C\|\widetilde\phi_h^n\|_{L^6}\|u_{h}^{n-1}\|_{L^3}\|\widetilde u_h^n\|_{L^2}\nonumber\\
&\le
C\|\nabla\widetilde\phi_h^n\|_{L^2}\|\nabla u_{h,1}^n\|_{L^2}\|\widetilde u_h^n\|_{L^2}+C\|\nabla \widetilde\phi_h^n\|_{L^2}\|\nabla u_h^{n-1}\|_{L^2}\|\widetilde u_h^n\|_{L^2}\nonumber\\
&\le
C\|\nabla\widetilde\phi_h^n\|_{L^2}^2+C\|\widetilde u_h^n\|_{L^2}^2.
\qquad\qquad \mbox{(use \eqref{ME4})}\label{unique3.5}
\end{align}
Further, taking the real parts of \eqref{unique3}, we get
\begin{align}
\|\nabla\widetilde u_h^n\|_{L^2}^2
&\le
\frac{1}{2}\left|{\rm Re}\left((\widetilde\phi_h^n u_{h,1}^n,\widetilde u_h^n )+(\widetilde\phi_h^n u_h^{n-1},\widetilde u_h^n)+(\phi_{h,2}^n\widetilde u_h^n,\widetilde u_h^n)+(\phi_h^{n-1}\widetilde u_h^n,\widetilde u_h^n)\right)\right|\nonumber\\
&\le
C\|\widetilde\phi_h^n\|_{L^6}\|u_{h,1}^n\|_{L^3}\|\widetilde u_h^n\|_{L^2}+C\|\widetilde\phi_h^n\|_{L^6}\|u_{h}^{n-1}\|_{L^3}\|\widetilde u_h^n\|_{L^2}\nonumber\\ 
&\quad
+C\|\phi_{h,2}^n\|_{L^6}\|\widetilde u_h^n\|_{L^3}\|\widetilde u_h^n\|_{L^2}+C\|\phi_h^{n-1}\|_{L^6}\|\widetilde u _h^n\|_{L^3}\|\widetilde u _h^n\|_{L^2}
\nonumber\\
&\le
C\|\nabla\widetilde\phi_h^n\|_{L^2}\|\nabla u_{h,1}^n\|_{L^2}\|\widetilde u_h^n\|_{L^2}
+C\|\nabla\widetilde\phi_h^n\|_{L^2}\|\nabla u_{h}^{n-1}\|_{L^2}\|\widetilde u_h^n\|_{L^2}
\nonumber\\ 
&\quad
+C\|\nabla\phi_{h,2}^n\|_{L^2}\|\nabla\widetilde u_h^n\|_{L^2}\|\widetilde u_h^n\|_{L^2}
+C\|\nabla\phi_h^{n-1}\|_{L^2}\|\nabla\widetilde u _h^n\|_{L^2}\|\widetilde u _h^n\|_{L^2}
\nonumber\\
&\le
C\|\nabla\widetilde\phi_h^n\|_{L^2}^2 +C\|\widetilde u_h^n\|_{L^2}^2 +\varepsilon\|\nabla\widetilde u_h^n \|_{L^2}^2.
\qquad\qquad\mbox{(use \eqref{ME4})}\nonumber
\end{align}
It follows from the above inequality and \eqref{unique3.5} that
\begin{align}
\|\widetilde u_h^n\|_{L^2}^2+\tau\|\nabla\widetilde u_h^n\|_{L^2}^2
\le C\tau\|\nabla\widetilde\phi_h^n\|_{L^2}^2\label{unique4}
\end{align}
as $\tau\le\tau^*_1$ ($\tau^*_1>0$ is a certain constant).\\
Taking $w_h=\widetilde \phi _h^n$ in \eqref{unique2}, along with \eqref{unique4}, we have
\begin{align}
\alpha\|\widetilde\phi_h^n\|_{L^2}^2+\beta^2\|\nabla\widetilde\phi_h^n\|_{L^2}^2 
&=
(|u_{h,1}^n|^2-|u_{h,2}^n|^2,\widetilde\phi_h^n)=(u_{h,1}^n(\widetilde u_h^n)^*+\widetilde u_h^n (u_{h,2}^n)^*,\widetilde\phi_h^n)\nonumber\\
&\le
\|u_{h,1}^n\|_{L^3}\|\widetilde u_h^n\|_{L^2}\|\widetilde\phi_h^n\|_{L^6}+\|\widetilde u_h^n\|_{L^2}\|u_{h,2}^n\|_{L^3}\|\widetilde\phi_h^n\|_{L^6}\nonumber\\
&\le
C\|\nabla u_{h,1}^n\|_{L^2}\|\widetilde u_h^n\|_{L^2}\|\nabla\widetilde\phi_h^n\|_{L^2} 
+C\|\widetilde u_h^n\|_{L^2}\|\nabla u_{h,2}^n\|_{L^2}\|\nabla\widetilde\phi_h^n\|_{L^2}\nonumber\\
&\le
C\|\widetilde u_h^n\|_{L^2}^2+\varepsilon\|\nabla\widetilde\phi_h^n\|_{L^2}^2
\qquad\qquad\mbox{(use \eqref{ME4})}\nonumber\\
&\le
(C\tau+\varepsilon)\|\nabla\widetilde\phi_h^n\|_{L^2}^2.\nonumber
\end{align}
Combining the above result and \eqref{unique4}, we have
\begin{align}
\|\widetilde u_h^n\|_{L^2}^2+\tau\|\nabla\widetilde u_h^n\|_{L^2}^2+\alpha\|\widetilde\phi_h^n\|_{L^2}^2+\beta^2\|\nabla\widetilde\phi_h^n\|_{L^2}^2
&\le 0\nonumber
\end{align}
as $\tau\le\tau^*_2$ ($\tau^*_2>0$ is a certain constant).

Thus, let $\tau^*=\min\{\tau^*_1,\tau^*_2\}$, we acquire
\begin{align}
\|\widetilde u_h^n\|_{L^2}=\|\nabla\widetilde u_h^n\|_{L^2}   
=\|\widetilde\phi_h^n\|_{L^2}=\|\nabla\widetilde\phi_h^n\|_{L^2}
=0\nonumber
\end{align}
as $\tau\le\tau^*$. The proof of Theorem \ref{TH4-2} is complete.
\end{proof}

\section{Optimal $L^2$ error estimates}
In order to get the optimal $L^2$ error estimates, we introduce the Ritz projection  $R_h:H_0^1(\Omega)\to{V_h}$, defined by 
\begin{align}
(\nabla(v-R_h v),\nabla w_h)=0,\quad \forall\,w_h\in V_h.\nonumber
\end{align}   
By the classical FE theory \cite{ritz1,ritz2}, it holds that 
\begin{align}
&\|v-R_h v\|_{L^2}+h\|\nabla(v-R_h v)\|_{L^2}\le Ch^{r+1}\|v\|_{H^{r+1}},\quad\forall\, v\in H^{r+1}(\Omega)\cap H_0^1(\Omega)\label{ritz1}\\
&\|R_h v\|_{L^\infty}\le C\|v\|_{H^2},  \quad \forall\, v\in H^2(\Omega)\cap H_0^1(\Omega).\label{ritz2}
\end{align}
Let $\Pi_h:C(\Omega)\to V_h$ be the Lagrange interpolation operator. By the classical interpolation theory \cite{interpolation1,ritz2}, it is easy to see that
\begin{align}\label{interpolation}
\|v- \Pi_h v\|_{L^2}+h\|\nabla(v- \Pi_h v)\|_{L^2}
\le Ch^{\ell+1}\|v\|_{H^{\ell+1}},\quad\, \forall\, v\in H^{\ell+1}(\Omega), \,\, 1\le\ell\le r.
\end{align}
 
For the numerical solution $(u_h^n,\phi_h^n)$ of the system \eqref{CN1}-\eqref{CN2}, we denote 
\begin{align*}
e_u^n=R_h u^n-u_h^n \quad\mbox{and}\quad 
e_\phi^n=R_h \phi^n-\phi_h^n.
\end{align*}
In the following subsections, we will give optimal $L^2$ error estimates of the fully-discrete scheme \eqref{CN1}-\eqref{CN2} of Schr\"{o}dinger--Helmholz equations (as  $\alpha \neq 0$ and $\beta\neq 0$) and Schr\"{o}dinger--Poisson equations (as $\alpha = 0$ and $\beta=1$), respectively.

\subsection{Error estimates of the Schr\"{o}dinger--Helmholz equations}
The weak form of system \eqref{i1}-\eqref{i3} can be written as
\begin{align}
&
{\bf i}(D_\tau u^n,v)-(\nabla\overline u^ {n-\frac{1}{2}},\nabla  v)-(\overline\phi^{n-\frac{1}{2}}\overline u^{n-\frac{1}{2}},v)=( R^{n-\frac{1}{2}},v),\label{weak1}\\
&
\alpha(\phi^n,w) 
+\beta^2(\nabla\phi^n,\nabla w)=(|u^n|^2,w)\label{weak2}
\end{align}
for any $v\in H^1_0(\Omega)$, $w\in H^1_0(\Omega)$, and $n=1,2,\dots,N$, where 
\begin{align}\label{trun-err}
R^{n-\frac{1}{2}}_{tr}= 
{\bf i}(D_\tau u^n-u_t^{n-\frac{1}{2}})
+\Delta(\overline u^{n-\frac{1}{2}}-u^{n-\frac{1}{2}}) 
-(\overline\phi^{n-\frac{1}{2}}\overline u^{n-\frac{1}{2}}
-\phi^{n-\frac{1}{2}}u^{n-\frac{1}{2}})
\end{align}
denotes the truncation error. By Taylor expansion and the assumption \eqref{solution}, we obtain that
\begin{align}
\tau\sum^N_{n=1}\|R^{n-\frac{1}{2}}_{tr}\|^2_{L^2}\le C\tau^4.\label{truncation}
\end{align}
Subtracting \eqref{weak1}-\eqref{weak2} from \eqref{CN1}-\eqref{CN2}, we can get
\begin{align}
{\bf i}(D_\tau e_u^n,v_h) 
-(\nabla\overline e_u^{n-\frac{1}{2}},\nabla v_h)
&-(\overline\phi^{n-\frac{1}{2}}\overline u^{n-\frac{1}{2}}-\overline\phi_h^{n-\frac{1}{2}}\overline u_h^{n-\frac{1}{2}},v_h)\nonumber\\
&\qquad
=(R^{n-\frac{1}{2}}_{tr},v_h)-{\bf i}(D_\tau(u^n-R_h u^n),v_h),\label{error1}\\
\alpha(e_\phi^n,w_h)+\beta^2(\nabla e_\phi^n,\nabla w_h)
&=(|u^n|^2-|u_h^n|^2,w_h)-\alpha(\phi^n-R_h\phi^n,w_h) \label{error2}
\end{align}
for any $v_h\in \widetilde{V}_h$, $w_h\in V_h$.\\
Taking $v_h=\overline e_u^{n-\frac{1}{2}}$ in \eqref{error1}, we obtain 
\begin{align}
&
\frac{\bf i}{2\tau}(\|e_u^n\|_{L^2}^2-\|e_u^{n-1}\|_{L^2}^2)
-\|\nabla\overline e_u^{n-\frac{1}{2}}\|_{L^2}^2\nonumber \\
&
=(R^{n-\frac{1}{2}}_{tr},\overline e_u^{n-\frac{1}{2}})-{\bf i}(D_\tau(u^n-R_h u^n),\overline e_u^{n-\frac{1}{2}})+(\overline\phi^{n-\frac{1}{2}}\overline u^{n-\frac{1}{2}}-\overline\phi _h^{n-\frac{1}{2}}\overline u_h^{n-\frac{1}{2}},\overline e_u^{n-\frac{1}{2}}).\nonumber
\end{align}
Taking the imaginary parts of the above equation leads to 
\begin{align}\label{u1}
&
\frac{1}{2\tau}(\|e_u^n\|_{L^2}^2-\|e_u^{n-1}\|_{L^2}^2)\\
&\le
\left|
{\rm Im}(R^{n-\frac{1}{2}}_{tr},\overline e_u^{n-\frac{1}{2}})
-{\rm Re}(D_\tau(u^n-R_h u^n),\overline e_u^{n-\frac{1}{2}})
+{\rm Im}(\overline\phi^{n-\frac{1}{2}}\overline u^{n-\frac{1}{2}}-\overline\phi _h^{n-\frac{1}{2}}\overline u_h^{n-\frac{1}{2}},\overline e_u^{n-\frac{1}{2}})
\right|\nonumber\\
&\le 
C\|R^{n-\frac{1}{2}}_{tr}\|_{L^2}^2+C\|\overline e_u^{n-\frac{1}{2}}\|_{L^2}^2+C\|D_\tau(u^n-R_h u^n)\|_{L^2}^2+\left|{\rm Im}(\overline\phi^{n-\frac{1}{2}}\overline u^{n-\frac{1}{2}}- \overline\phi_h^{n-\frac{1}{2}}\overline u_h^{n-\frac{1}{2}},\overline e_u^{n-\frac{1}{2}})\right|\nonumber\\
&\le
C\| R^{n-\frac{1}{2}}_{tr}\|_{L^2}^2+C(\|e_u^n\|_{L^2}^2+\|e_u^{n-1}\|_{L^2}^2)+Ch^{2(r+1)}+\left|{\rm Im}(\overline\phi^{n-\frac{1}{2}}\overline u^{n-\frac{1}{2}}- \overline\phi_h^{n-\frac{1}{2}}\overline u_h^{n-\frac{1}{2}},\overline e_u^{n-\frac{1}{2}})\right|,\nonumber
\end{align}
where we use \eqref{ritz1} and \eqref{solution}  to get the last inequality. It remains to estimate the last term
\begin{align}
&
\left|{\rm Im}(\overline\phi^{n-\frac{1}{2}}\overline u^{n-\frac{1}{2}}-\overline\phi _h^{n-\frac{1}{2}}\overline u_h^{n-\frac{1}{2}},\overline e_u^{n-\frac{1}{2}})\right|\nonumber\\
&\le
\frac{1}{4}\left|{\rm Im}(\phi^n u^n-\phi_h^n u_h^n,\overline e_u^{n-\frac{1}{2}})\right|+\frac{1}{4}\left|{\rm Im}(\phi^n u^{n-1}-\phi_h^n u_h^{n-1},\overline e_u^{n-\frac{1}{2}})\right|\nonumber\\
&
\quad+\frac{1}{4}\left|{\rm Im}(\phi^{n-1}u^n-\phi_h^{n-1} u_h^n,\overline e_u^{n-\frac{1}{2}})\right|
+\frac{1}{4}\left|{\rm Im}(\phi^{n-1}u^{n-1}-\phi_h^{n-1} u_h^{n-1},\overline e_u^{n-\frac{1}{2}})\right|\nonumber\\
&=:
\sum\limits_{i=1}^4 I_i.\nonumber 
\end{align}
Based on \eqref{ritz1}-\eqref{ritz2} and \eqref{solution}, we have 
\begin{align}
I_1
&\le
\frac14\Big|{\rm Im}
((\phi^n-R_h\phi^n)u^n,\overline e_u^{n-\frac{1}{2}})+{\rm Im}(R_h\phi^n(u^n-R_h u^n),\overline e_u^{n-\frac{1}{2}})\nonumber \\ 
&
\quad+{\rm Im}(R_h\phi^n e_u^n,\overline e_u^{n-\frac{1}{2}})+{\rm Im}(R_h u^ne_\phi^n,\overline e_u^{n-\frac{1}{2}})-{\rm Im}(e_\phi^n e_u^n,\overline e_u^{n-\frac{1}{2}}) \Big|\nonumber\\
&\le 
C\|u^n\|_{L^\infty}\|\phi^n-R_h\phi^n\|_{L^2}\|\overline e_u^{n-\frac{1}{2}}\|_{L^2}
+C\|R_h\phi^n\|_{L^\infty}\|u^n-R_h u^n\|_{L^2}\|\overline e_u^{n-\frac{1}{2}}\|_{L^2}
\nonumber\\
&
\quad+C\|R_h\phi^n\|_{L^\infty}\|e_u^n\|_{L^2}\|\overline e_u^{n-\frac{1}{2}}\|_{L^2}
+C\|R_h u^n\|_{L^\infty}\|e_\phi^n\|_{L^2}\|\overline e_u^{n-\frac{1}{2}}\|_{L^2}\nonumber\\
&\quad
+C\|e_\phi^n\|_{L^6}\|e_u^n\|_{L^3}\|\overline e_u^{n-\frac{1}{2}}\|_{L^2}\nonumber\\
&\le
  C\|\phi^n-R_h\phi^n\|^2_{L^2}
  +C\|u^n-R_h u^n\|^2_{L^2}
  +C\|e^n_u\|^2_{L^2}
  +C\|e^n_{\phi}\|^2_{L^2}\nonumber\\
&\quad
  +C\|\overline e_u^{n-\frac{1}{2}}\|^2_{L^2}
  +C\|\nabla e_\phi^n\|_{L^2}\|\nabla e _u^n\|_{L^2}
  \|\overline e_u^{n-\frac{1}{2}}\|_{L^2}
  \nonumber\\
& \le
Ch^{2(r+1)}+C\|\overline e_u^{n-\frac{1}{2}}\|^2_{L^2} 
+C\|e_u^n\|^2_{L^2}+C\|e_\phi^n\|^2_{L^2}
+\varepsilon\beta^2 \|\nabla e_\phi^n\|_{L^2}^2
. \qquad\mbox{(use \eqref{ME4})}  \label{I1} 
\end{align}
Similarly, we have
\begin{align}
&I_2+I_3+I_4 \nonumber\\
&\le Ch^{2(r+1)}+C(\|e_u^n\|_{L^2}^2 +\|e_u^{n-1}\|_{L^2}^2)\nonumber\\
&\quad
+C(\|e_\phi ^n\|_{L^2}^2+\|e_\phi ^{n-1}\|_{L^2}^2)
+\varepsilon\beta^2( \|\nabla e_\phi^n\|_{L^2}^2+ \|\nabla e_\phi^{n-1}\|_{L^2}^2)
.\label{I2}
\end{align} 
Thus, substituting \eqref{I1}-\eqref{I2} into \eqref{u1}, we obtain
\begin{align}
&\frac{1}{2\tau}(\|e_u^n\|_{L^2}^2-\|e_u^{n-1}\|_{L^2}^2) \nonumber\\
&\le 
Ch^{2(r+1)}+C\|R^{n-\frac12}_{tr}\|^2_{L^2}+C(\|e_u^n\|_{L^2}^2+\|e_u^{n-1}\|_{L^2}^2)\nonumber\\
&\quad
+C(\|e_\phi ^n\|_{L^2}^2+\|e_\phi ^{n-1}\|_{L^2}^2)+
\varepsilon\beta^2( \|\nabla e_\phi^n\|_{L^2}^2+ \|\nabla e_\phi^{n-1}\|_{L^2}^2).\label{u2}
\end{align}
In order to estimate $\|e_\phi^n\|_{L^2}$, we choose $w_h=e_\phi^n$ in \eqref{error2} to get
\begin{align}
\alpha\|e_\phi^n\|_{L^2}^2
+\beta^2\|\nabla e_\phi^n\|_{L^2}^2
&\le
\left|(|u^n|^2-|u_h^n|^2,e_\phi^n)\right|
+\alpha\left|(\phi^n-R_h\phi^n,e_\phi^n)\right|\nonumber \\
&\le
\left|(|u^n|^2-|u_h^n|^2,e_\phi^n)\right| 
+\alpha\|\phi^n-R_h\phi^n\|_{L^2}\|e_\phi^n\|_{L^2} \nonumber \\
&
\le\left|(|u^n|^2-|u_h^n|^2,e_\phi^n)\right|
+Ch^{2(r+1)}+\varepsilon\alpha\|e_\phi^n\|_{L^2}^2, \label{phi1}
\end{align}
where the last inequality is obtained by \eqref{solution} and \eqref{ritz1}. By \eqref{ritz1}-\eqref{ritz2} and \eqref{solution}, we get 
\begin{align}
&\big|(|u^n|^2-|u_h^n|^2,e_\phi^n)\big|\nonumber\\
&=\big|(u^n(u^n)^*-u_h^n(u_h^n)^*,e_\phi^n)\big|\nonumber\\
&\le\big|(u^n(u^n-R_h u^n)^*,e_\phi^n)\big|
+\big|((u^n-R_h u^n)(R_h u^n)^*, e_\phi^n)\big|\nonumber\\
&\quad
+\big|(R_h u^n(e_u^n)^*, e_\phi^n)\big|
+\big|(e_u^n(R_h u^n)^*, e_\phi^n)\big|
+\big|(e_u^n(e_u^n)^*,e_\phi^n)\big|\nonumber\\
&\le
(\|u^n\|_{L^\infty}+\|R_h u^n\|_{L^\infty})\|u^n-R_h u^n\|_{L^2}\|e_\phi^n\|_{L^2}
\nonumber\\
&\quad
+C\|R_h u^n\|_{L^\infty}\|e_u^n\|_{L^2}\|e_\phi^n\|_{L^2}
+\|e_u^n\|_{L^2}\|e_u^n\|_{L^3}\|e_\phi^n\|_{L^6}\nonumber  \\
&\le 
Ch^{2(r+1)}+\varepsilon\alpha\|e^n_{\phi}\|^2_{L^2}+C\|e^n_{u}\|^2_{L^2}+C\|e_u^n\|_{L^2}\|\nabla e_u^n\|_{L^2}\|\nabla e_\phi^n\|_{L^2}\nonumber\\
&\le 
Ch^{2(r+1)}+\varepsilon
\alpha\|e_\phi^n\|_{L^2}^2
+C\|e_u^n\|_{L^2}^2+\varepsilon \beta^2\|\nabla e_\phi^n\|_{L^2}^2
.\quad\mbox{(use \eqref{ME4})} \nonumber
\end{align} 
The above inequality with \eqref{phi1} implies
\begin{align}
\alpha\|e_\phi^n\|_{L^2}^2+\beta^2\|\nabla e_\phi^n\|_{L^2}^2 
\le Ch^{2(r+1)}+C\|e_u^n\|_{L^2}^2.\label{phi2}
\end{align}
Combining \eqref{u2} and \eqref{phi2}, we obtain
\begin{align}
\frac{1}{2\tau}(\|e^n_u\|^2_{L^2}-\|e^{n-1}_u\|^2_{L^2})
\le Ch^{2(r+1)}+C\|R^{n-\frac12}_{tr}\|^2_{L^2}+C(\|e^n_u\|^2_{L^2}+\|e^{n-1}_u\|^2_{L^2}).\nonumber
\end{align}
Summing up the above result from $n=1$ to $n=m$, we get
\begin{align}
\|e_u^m\|_{L^2}^2 
\le& 
Ch^{2(r+1)}+\|e_u^0\|_{L^2}^2+C\tau\sum^m_{n=1}\|R^{n-\frac12}_{tr}\|^2_{L^2}+C\tau \sum\limits_{n=0}^m\| e_u^n\|_{L^2}^2\nonumber\\
\le&
 C\tau^4+Ch^{2(r+1)}+C\tau \sum\limits_{n=0}^m\| e_u^n\|_{L^2}^2\nonumber
\end{align}
for $m=1,2,...,N,$ where we use the fact $e^0_u=0$ and \eqref{truncation} to get the last inequality. By the discrete Gronwall's inequality, there exists a positive constant $\tau_1>0$, such that
\begin{align}
\max_{1\le n\le N}\|e_u^n\|_{L^2}^2\le C(h^{2(r+1)}+\tau^4)\nonumber
\end{align}
as $\tau<\tau_1$. Combining the above result and \eqref{phi2}, we obtain
\begin{align}
\max_{1\le n\le N}\|e_\phi^n\|_{L^2}^2 \le C(h^{2(r+1)}+\tau^4).\nonumber
\end{align}
Thus, based on the above two estimates and \eqref{ritz1}, we have 
\begin{align}
&
\|u^n-u_h^n\|_{L^2}
\le \|u^n-R_h u^n\|_{L^2}+\|e_u^n\|_{L^2}
\le C(h^{r+1}+\tau^2),\label{Sch-Hel-1}\\
&
\|\phi^n-\phi_h^n\|_{L^2}
\le \|\phi^n-R_h\phi^n\|_{L^2}+\|e_\phi^n\|_{L^2}
\le C(h^{r+1}+\tau^2)\label{Sch-Hel-2} 
\end{align}
for $n=1,2,...,N$.
\subsection{Error estimates for the Schr\"{o}dinger--Poisson equations}
In this subsection, we consider the error estimates of numerical scheme \eqref{CN1}-\eqref{CN2}  for $\alpha=0$, $\beta=1$. Taking $\alpha=0$ and $\beta=1$ in \eqref{error1}-\eqref{error2}, the error equations can be presented as follows
\begin{align} 
&
{\bf i}(D_\tau e_u^n,v_h) 
-(\nabla\overline e_u^{n-\frac{1}{2}},\nabla v_h) 
-(\overline\phi^{n-\frac{1}{2}}\overline u^{n-\frac{1}{2}}-\overline\phi _h^{n-\frac{1}{2}}\overline u_h^{n-\frac{1}{2}},v_h) 
\nonumber\\
&
\qquad=(R^{n-\frac{1}{2}}_{tr},v_h) 
-{\bf i}(D_\tau(u^n-R_h u^n),v_h),\label{error3}\\
&
(\nabla e_\phi^n,\nabla w_h)=(|u^n|^2-|u_h^n|^2,w_h)\label{error4}
\end{align}
for any $v_h\in \widetilde{V}_h$ and $w_h\in V_h$, where $R^{n-\frac{1}{2}}_{tr}$ is truncation error, defined identically as \eqref{trun-err}.\\
Taking $w_h=e_\phi^n$ in \eqref{error4} and using \eqref{ritz1}, we get
\begin{align}
\|\nabla e_\phi^n\|_{L^2}^2&=(|u^n|^2-|u_h^n|^2,e_\phi^n)=(u^n(u^n)^*-u_h^n(u_h^n)^*,e_\phi^n)\nonumber \\
&=
\big|(u^n(u^n-R_h u^n)^*, e_\phi^n)\big|
+\big|((u^n-R_h u^n)(u_h^n)^*,e_\phi^n)\big|\nonumber\\
&\qquad
+\big|(u^n(e_u^n)^*, e_\phi^n)\big|
+\big|(e_u^n(u_h^n)^*,e_\phi^n)\big|
\nonumber\\
&\le 
\|u^n\|_{L^6}\|u^n-R_h u^n\|_{L^2}\|e_\phi^n\|_{L^3}
+\|u^n-R_h u^n\|_{L^2}\|u_h^n\|_{L^6}\|e_\phi^n\|_{L^3}\nonumber\\
&\qquad+
\|u^n\|_{L^6}\|e_u^n\|_{L^2}\|e_\phi^n\|_{L^3}
+\|e_u^n\|_{L^2}\|u_h^n\|_{L^6}\|e_\phi^n\|_{L^3}
\nonumber\\
&\le 
Ch^{r+1}\|u^n\|_{L^6}\|u^n\|_{H^{r+1}}\|\nabla e_\phi^n\|_{L^2}
+Ch^{r+1}\|u^n\|_{H^{r+1}}\|\nabla u_h^n\|_{L^2}\|\nabla e_\phi^n\|_{L^2}
\nonumber\\
&\qquad
+
C\|u^n\|_{L^6}\|e_u^n\|_{L^2}\|\nabla e_\phi^n\|_{L^2}
+C\|e_u^n\|_{L^2}\|\nabla u_h^n\|_{L^2}\|\nabla e_\phi^n\|_{L^2}
\nonumber\\
&\le
Ch^{r+1}\|\nabla e_\phi^n\|_{L^2}
+C\|e_u^n\|_{L^2}\|\nabla e_\phi^n\|_{L^2},\quad
\qquad \qquad\quad\,\, \mbox{(use \eqref{solution} and \eqref{ME4})}\nonumber 
\end{align}
which implies
\begin{align}
\|\nabla e_\phi^n\|_{L^2}
\le Ch^{r+1}+C\|e_u^n\|_{L^2}.\label{phi3}
\end{align}
In order to estimate $\|e_\phi^n\|_{L^2}$, we introduce the following dual problem
\begin{align}
-\Delta \psi
=&e_\phi ^n,
\quad\,\,\,\text{in }\Omega, \label{ell-pro-1}\\
\psi=&0, 
\qquad\text{on }\partial \Omega.\label{ell-pro-2}
\end{align}
By the classic estimates of elliptic equation \cite{Evans}, it holds that 
\begin{align}
\|\psi\|_{H^2} 
\le\|e_\phi^n\|_{L^2},\label{dual}
\end{align}
for $\psi\in H^2(\Omega)$.\\
Integrating \eqref{ell-pro-1} against $e^n_\phi$ and using \eqref{error4}, we get 
\begin{align}
\|e_\phi^n\|_{L^2}^2 
&=
(\nabla\psi,\nabla e_\phi^n) 
=(\nabla\Pi_h\psi,\nabla e_\phi^n)+(\nabla(\psi-\Pi_h\psi),\nabla e_\phi^n) \nonumber \\
&=
(|u^n|^2-|u_h^n|^2,\Pi _h\psi)+(\nabla(\psi-\Pi_h\psi),\nabla e_\phi^n) \nonumber\\
&\le
\big|(u^n(u^n-R_h u^n)^*, \Pi_h\psi)\big|
+\big|((u^n-R_h u^n)(u_h^n)^*, \Pi_h\psi)\big|\nonumber\\
&\quad
+\big|(u^n(e_u^n)^*+e_u^n(u_h^n)^*, \Pi_h\psi)\big|
+\|\nabla(\psi-\Pi_h\psi )\|_{L^2}\|\nabla e_\phi^n\|_{L^2}\nonumber\\
&\le 
\|u^n\|_{L^6}\|u^n-R_h u^n\|_{L^2}\|\Pi_h\psi\|_{L^3}
+\|u^n-R_h u^n\|_{L^2}\|u^n_h\|_{L^6}\|\Pi_h\psi\|_{L^3}\nonumber\\
&\quad
+\|u^n\|_{L^6}\|e_u^n\|_{L^2}\|\Pi_h\psi\|_{L^3}
+\|e_u^n\|_{L^2}\|u_h^n\|_{L^6}\|\Pi_h\psi\|_{L^3}
+Ch\|\psi\|_{H^2}\|\nabla e_\phi^n\|_{L^2}\nonumber\\
&\le 
Ch^{r+1}\|u^n\|_{L^6}\|u^n\|_{H^{r+1}}\|\nabla\Pi_h\psi\|_{L^2}
+Ch^{r+1}\|u^n\|_{H^{r+1}}\|\nabla u^n_h\|_{L^2}\|\nabla \Pi_h\psi\|_{L^2}\nonumber\\
&\quad
+C\|u^n\|_{L^6}\|e_u^n\|_{L^2}\|\nabla \Pi_h\psi\|_{L^2}
+C\|e_u^n\|_{L^2}\|\nabla u_h^n\|_{L^2}\|\nabla \Pi_h\psi\|_{L^2}
+Ch\|\psi\|_{H^2}\|\nabla e_\phi^n\|_{L^2}\nonumber\\
&\le
C(h^{r+1}+\|e_u^n\|_{L^2})\|e^n_{\phi}\|_{L^2}+Ch\|e_\phi^n\|_{L^2}\|\nabla e_\phi^n\|_{L^2},    \label{phi4}
\end{align}
where we use \eqref{ME4}, \eqref{interpolation} and \eqref{dual} to get the last inequality. From \eqref{phi3} and \eqref{phi4}, we can get
\begin{align}
\|e_\phi^n\|_{L^2}\le Ch^{r+1}+C\|e_u^n\|_{L^2}.\label{phi5}
\end{align}
Since the error equation \eqref{error3} is equivalent to \eqref{error1}, we substitute $v_h=\overline{e}^{n-\frac12}_u$ into \eqref{error3}, and apply the same analysis as \eqref{u1}-\eqref{u2} to obtain the following estimate:
\begin{align}
&\frac{1}{2\tau}(\|e_u^n\|_{L^2}^2-\|e_u^{n-1}\|_{L^2}^2) \nonumber\\
&\le 
Ch^{2(r+1)}+C\|R^{n-\frac12}_{tr}\|^2_{L^2}+C(\|e_u^n\|_{L^2}^2+\|e_u^{n-1}\|_{L^2}^2)\nonumber\\
&\quad
+C(\|e_\phi ^n\|_{L^2}^2+\|e_\phi ^{n-1}\|_{L^2}^2)+
\varepsilon\beta^2( \|\nabla e_\phi^n\|_{L^2}^2+ \|\nabla e_\phi^{n-1}\|_{L^2}^2).\label{u4}
\end{align}
Substituting \eqref{phi3}, \eqref{phi5} into \eqref{u4}, and summing up from $n=1$ to $n=m$, we get
\begin{align*}
\|e_u^m\|_{L^2}^2 
\le& Ch^{2(r+1)}+\|e_u^0\|_{L^2}^2+C\tau\sum^m_{n=1}\|R^{n-\frac12}_{tr}\|^2_{L^2}+C\tau\sum\limits_{n=0}^m\|e_u^n\|_{L^2}^2\\
\le&
C(h^{2(r+1)}+\tau^4)+C\tau\sum\limits_{n=0}^m\|e_u^n\|_{L^2}^2
\end{align*}
for $m=1,2,...,N,$ where we use \eqref{truncation} and $e^0_u=0$. By the discrete Gronwall's inequality, there exists a positive constant $\tau_2>0$, such that
\begin{align}
\max_{1\le n\le N}\|e_u^n\|_{L^2}^2\le C(h^{2(r+1)}+\tau^4)\nonumber
\end{align}
as $\tau<\tau_2$.\\
Combining the above result and \eqref{phi5}, we have
\begin{align}
\max_{1\le n\le N}\|e_\phi^n\|_{L^2}^2 \le C(h^{2(r+1)}+\tau^4).\nonumber
\end{align}
By the estimate \eqref{ritz1} of Ritz projection, we have 
\begin{align}
&
\|u^n-u_h^n\|_{L^2}\le \|u^n-R_h u^n\|_{L^2} +\|e_u^n\|_{L^2}
\le C(h^{r+1}+\tau^2),\label{Sch-poss-1}\\
&
\|\phi^n-\phi_h^n\|_{L^2}\le \|\phi^n-R_h\phi^n\|_{L^2}+\|e_\phi^n\|_{L^2}
\le C(h^{r+1}+\tau^2)\label{Sch-poss-2}
\end{align}
for $n=1,2,...,N$. 
Thus, based on Theorem \ref{TH4-2}, \eqref{Sch-Hel-1}-\eqref{Sch-Hel-2} and \eqref{Sch-poss-1}-\eqref{Sch-poss-2}, by setting $\tau_0=\min\{\tau^*, \tau_1, \tau_2\}$, we complete the proof of Theorem \ref{main}.

\section{Numerical Examples}
In this section, we present some numerical examples to verify the convergence rate and conservation properties of our scheme \eqref{CN1}-\eqref{CN2}. All the computations are performed by FreeFem++.

\begin{example}\normalfont \label{example} 
We consider the following two-dimensional Schr\"{o}dinger--Helmholz system
\begin{align}
&\quad\qquad\qquad\qquad\qquad\qquad{\bf i}u_t+\Delta u-\phi u=f_1,
&&{\bf x}\in\Omega,\, 0<t\le T ,\label{ex-equ-1}\\
&\quad\qquad\qquad\qquad\qquad\qquad\phi-\Delta\phi=|u|^2+f_2,
&&{\bf x}\in\Omega, \, 0<t\le T,\label{ex-equ-2}\\
&\quad\qquad\qquad\qquad\qquad\qquad u({\bf x},0)=u_0({\bf x}),\quad\phi({\bf x},0)=\phi_0({\bf x}),
&& {\bf x}\in\Omega,\label{ex-equ-3}\\
&\quad\qquad\qquad\qquad\qquad\qquad u({\bf x},t)=\phi({\bf x},t)=0,
&&{\bf x}\in\partial\Omega, \, 0<t\le T,\label{ex-equ-4}
\end{align}
where $\Omega=[0,1]\times[0,1] $ and we take $T=0.5$. Moreover, the initial conditions $u_0$, $\phi_0$ and right-hand side function $f_1$, $f_2$ are determined by the following exact solutions
\begin{align}
&u=e^{({\bf i}+1)t}\sin(x)\sin(y)\sin(\pi x)\sin(\pi y),\nonumber\\
&\phi=e^{(t+x+y)}(1-x)(1-y)\sin(x)\sin(y).\nonumber
\end{align}
The system \eqref{ex-equ-1}-\eqref{ex-equ-4} is discretized by the implicit Crank--Nicolson FE method \eqref{CN1}-\eqref{CN2} with linear and quadratic FE approximation. Since our algorithm \eqref{CN1}-\eqref{CN2} is nonlinear, the following Picard's iteration is used to obtain the numerical solution $(u^n_h, \phi^n_h)$ at each time step.\vspace{0.08cm}\\
{\bf Picard's iteration for \eqref{CN1}-\eqref{CN2} }\vspace{0.05cm}\\
{\bf Step 1.}
Initialization for the time marching: Let the time step $n=0$, and set the initial value $u_h^0=R_h u_0$, $\phi_h^0=R_h\phi_0$.\\
{\bf Step 2.}
Initialization for nonlinear iteration: For $n\ge1$, set $u_h^{n,0}=u_h^{n-1}$ and define by $\phi_h^{n,0}$ the solution of the following system:
\begin{align}
(\phi_h^{n,0},w_h)+(\nabla\phi_h^{n,0},\nabla w_h)=(|u_h^{n,0}|^2,w_h)+(f_2^n,w_h),\qquad\forall\, w_h\in V_h.\nonumber
\end{align}
{\bf Step 3.}
FE computation on each time level: For $\ell\ge 0$, find $u_h^{n,\ell+1}\in\widetilde{V}_h$, such that
\begin{align}
&{\bf i}\bigg( \frac{u_h^{n,\ell+1}-u_h^{n-1}}{\tau},v_h \bigg) 
- \bigg( \nabla \frac{u_h^{n,\ell+1} + u_h^{n-1}}{2},\nabla v_h\bigg) 
- \bigg(\frac{\phi_h^{n,\ell}+\phi_h^{n-1}}{2}\frac{u_h^{n,\ell+1}+u_h^{n-1}}{2},v_h\bigg)\nonumber\\ 
&= \bigg(f_1^{n-\frac{1}{2}},v_h\bigg),\qquad \forall\, v_h\in \widetilde{V}_h,\nonumber
\end{align}
and find $\phi_h^{n,\ell+1}\in V_h$, such that 
\begin{align}
(\phi_h^{n,\ell+1},w_h)+(\nabla\phi_h^{n,\ell+1},\nabla w_h)=(|u_h^{n,\ell+1}|^2,w_h)+(f_2^n,w_h),\qquad\forall\, w_h\in V_h.\nonumber
\end{align}
\\[-0.35cm]
{\bf Step 4.}
To check the stopping criteria for nonlinear iteration: For a fixed tolerance $\delta=10^{-7}$, stop the iteration when
\begin{align}
\|u_h^{n,\ell+1}-u_h^{n,\ell}\|_{L^2}
\le\delta,\quad\| \phi_h^{n,\ell+1}-\phi_h^{n,\ell}\|_{L^2}\le\delta, \nonumber
\end{align} 
and set $u_h^{n}=u_h^{n,\ell+1}$,  $\phi_h^{n}=\phi_h^{n,\ell+1}$. Otherwise, set $\ell \leftarrow \ell+1$, and go to {\bf Step 3} to continue the nonlinear iteration.\\
{\bf Step 5.}
Time marching: if $n=N$, stop the iteration. Otherwise, set $n-1\leftarrow n$, and go to {\bf Step 2}.\vspace{0.08cm}

To test the convergence order, we choose $\tau=h$ for linear FE approximation and $\tau=h^{\frac32}$ for quadratic FE approximation, respectively. Obviously, numerical results in Table \ref{space1} and Table \ref{space2} are consistent with the theoretical analysis in Theorem \ref{main}. 
\begin{table}
\renewcommand\arraystretch{1.26} 
\centering 
\hspace{-2.5mm}
\caption{$L^2$-norm errors of linear FE method with $\tau=h$}
\vspace{0.2cm}
\begin{tabular}{c|c|c|c|c}
\hline
$ h$ & $\|{\bf u}^n-{\bf u}_h^n\|_{L^2}$ & order & $\|\phi^n-\phi_h^n\|_{L^2}$& order    \\
\hline
$1/10$ &7.1770E-03 &/& 3.9596E-03   &/  \\
$1/20$&1.9732E-03 &1.86& 1.0579E-03 &1.90 \\
$1/40$&5.2178E-04 &1.92 & 2.7276E-04 &1.96\\
\hline
\end{tabular}\label{space1}
\end{table}

\begin{table}\label{5.2}
\renewcommand\arraystretch{1.26}   
\centering 
\hspace{-2.5mm}
\caption{$L^2$-norm errors of quadratic FE method with $\tau=h^{\frac32}$}
\vspace{0.2cm}
\begin{tabular}{c|c|c|c|c}
\hline
$ h$ & $\|{\bf u}^n-{\bf u}_h^n\|_{L^2}$ & order & $\|\phi^n-\phi_h^n\|_{L^2}$& order    \\
\hline
$1/10$&2.0123E-04&/&8.5171E-05&/  \\
$1/20$&2.5669E-05&2.97 &1.0736E-05&2.99 \\
$1/40$&3.1864E-06&3.01 &1.3468E-06&2.99 \\
\hline
\end{tabular}\label{space2}
\end{table}
\end{example}

\begin{example}\normalfont \label{example2} 
We consider the following two-dimensional Schr\"{o}dinger--Poisson system
\begin{align}
&\quad\qquad\qquad\qquad\qquad\qquad{\bf i}{u_t}+\Delta u-\phi u=g_1,
&&{\bf x}\in\Omega, \,0<t\le T,\label{Exam2-1}\\
&\quad\qquad\qquad\qquad\qquad\qquad-\Delta\phi=|u|^2+g_2,
&&{\bf x}\in\Omega, \,0<t\le T,\label{Exam2-2}\\
&\quad\qquad\qquad\qquad\qquad\qquad u({\bf x},0)=u_0({\bf x}),\quad\phi({\bf x},0)=\phi_0({\bf x}),
&&{\bf x}\in\Omega,\label{Exam2-3}\\
&\quad\qquad\qquad\qquad\qquad\qquad u({\bf x},t)=\phi({\bf x},t)=0,
&&{\bf x}\in\partial\Omega, \, {0<t\le T},\label{Exam2-4}
\end{align}
where $\Omega=[0,1]\times[0,1]$ and we take $T=$ 0.5. Moreover, the function $u_0$, $\phi_0$ and $g_1$, $g_2$ are determined by the following exact solutions
\begin{align*}
&u=2e^{{\bf i}t+(x+y)/5}(1+5t^3)x(1-x)y(1-y),\\ 
&\phi=5(1+3t^2+\sin(t))\sin\left(\frac{x}{2}\right)\sin\left(\frac{x}{2}\right)(1-x)(1-y).
\end{align*}

We solve the system \eqref{Exam2-1}-\eqref{Exam2-4} by Crank--Nicolson FE scheme \eqref{CN1}-\eqref{CN2} with both linear and quadratic approximation. Similar to the Example \ref{example}, we apply Picard's iteration method to solve our nonlinear scheme \eqref{CN1}-\eqref{CN2} for $\alpha=0$ and $\beta=1$. To verify the convergence rate, we choose $\tau=h$ and $\tau=h^{\frac32}$ for linear and quadratic FE approximation, respectively. The numerical results are presented in Table \ref{space3} and Table \ref{space4}. These results indicate that the $L^2$ error results of linear FE approximation are proportional to $h^2$, and the $L^2$ error results of quadratic FE approximation are proportional to $h^3$, which are consistent with theoretical analysis.
\begin{table}
\renewcommand\arraystretch{1.26}  
\centering 
\hspace{-2.5mm}
\caption {$L^2$-norm errors of linear FE method with $\tau=h$ }
\vspace{0.2cm}
\begin{tabular}{c|c|c|c|c}
\hline
$ h$ & $\|{\bf u}^n-{\bf u}_h^n\|_{L^2}$ & order & $\|\phi^n-\phi_h^n\|_{L^2}$& order  \\ 
\hline					
$1/10$&3.4536E-03 &/& 2.1514E-03&/  \\
$1/20$&1.1618E-03 &1.57&5.9753E-04&1.85\\
$1/40$&3.0870E-04 &1.91&1.5686E-04&1.93 \\
\hline
\end{tabular}\label{space3}
\end{table}
\begin{table}
\renewcommand\arraystretch{1.26}  
\centering 
\hspace{-2.5mm}
\caption{$L^2$-norm errors of quadratic FE method with $\tau=h^{\frac32}$}
\vspace{0.2cm}
\begin{tabular}{c|c|c|c|c}
\hline
$h$ & $\|{\bf u}^n-{\bf u}_h^n\|_{L^2}$ & order & $\|\phi^n-\phi_h^n\|_{L^2}$& order    \\
\hline 			 
$1/10$&1.5718E-04&/&3.7893E-05&/  \\
$1/20$&2.0529E-05&2.94 &4.8388E-06&2.97 \\
$1/40$&2.5437E-06&3.01 &6.0897E-07&2.99 \\
\hline
\end{tabular}\label{space4}
\end{table}
\end{example}

\begin{example}\normalfont\label{example3} 
To verify the discrete mass and energy conservation, we consider the following Schr\"{o}dinger--type system
\begin{align}
&\qquad\qquad\qquad\qquad\qquad{\bf i}u_t+\Delta u-\phi u=0,
&&{\bf x}\in\Omega,\, 0<t\le T,\label{eg31}\\   
&\qquad\qquad\qquad\qquad\qquad\alpha\phi-\beta^2\Delta\phi=|u|^2,
&&{\bf x}\in \Omega,\, 0<t\le T,\label{eg32}\\ 
&\qquad\qquad\qquad\qquad\qquad u({\bf x},0)=u_0({\bf x}),\quad\phi({\bf x},0)=\phi_0({\bf x}),
&&{\bf x}\in\Omega,\label{eg33}\\  
&\qquad\qquad\qquad\qquad\qquad u({\bf x},t)=\phi({\bf x},t)=0 ,
&&{\bf x}\in\partial\Omega ,\, {0<t\le T}\label{eg34}
\end{align}
with the initial conditions
\begin{align}
&u({\bf x},0)=\sin(x)\sin(y)\sin(\pi x)\sin(\pi y), \nonumber\\
&\phi({\bf x},0)=e^{x+y}(1-x)(1-y)\sin(x)\sin(y),\nonumber
\end{align}
where $\Omega=[0, 1]\times[0, 1]$.
To test mass and energy conservation, we apply the Crank--Nicolson linear FE method to solve the above system \eqref{eg31}-\eqref{eg34}, which includes two models:

\vspace{-5.8pt}

\begin{itemize}
\setlength{\itemsep}{0pt}
\setlength{\parsep}{0pt}
\item Schr\"{o}dinger--Helmholz system (as $\alpha=1$ and $\beta=1$) 
\item Schr\"{o}dinger--Poisson system (as $\alpha=0$ and $\beta=1$)
\end{itemize}

\vspace{-5.8pt}

The numerical results at different time stages $T=0,10,20,...,100$ are presented in Fig.\ref{figure}.  From Fig.\ref{figure}, we can see that the discrete energy $\mathcal{E}^n_h$ and mass $\mathcal{M}^n_h$ are conserved exactly during time evolution, which consistent with the theoretical analysis in Theorem \ref{energy}.

\begin{figure}[htp]
\centering
\subfloat[Schr\"{o}dinger--Helmholz system]{
\epsfig{file=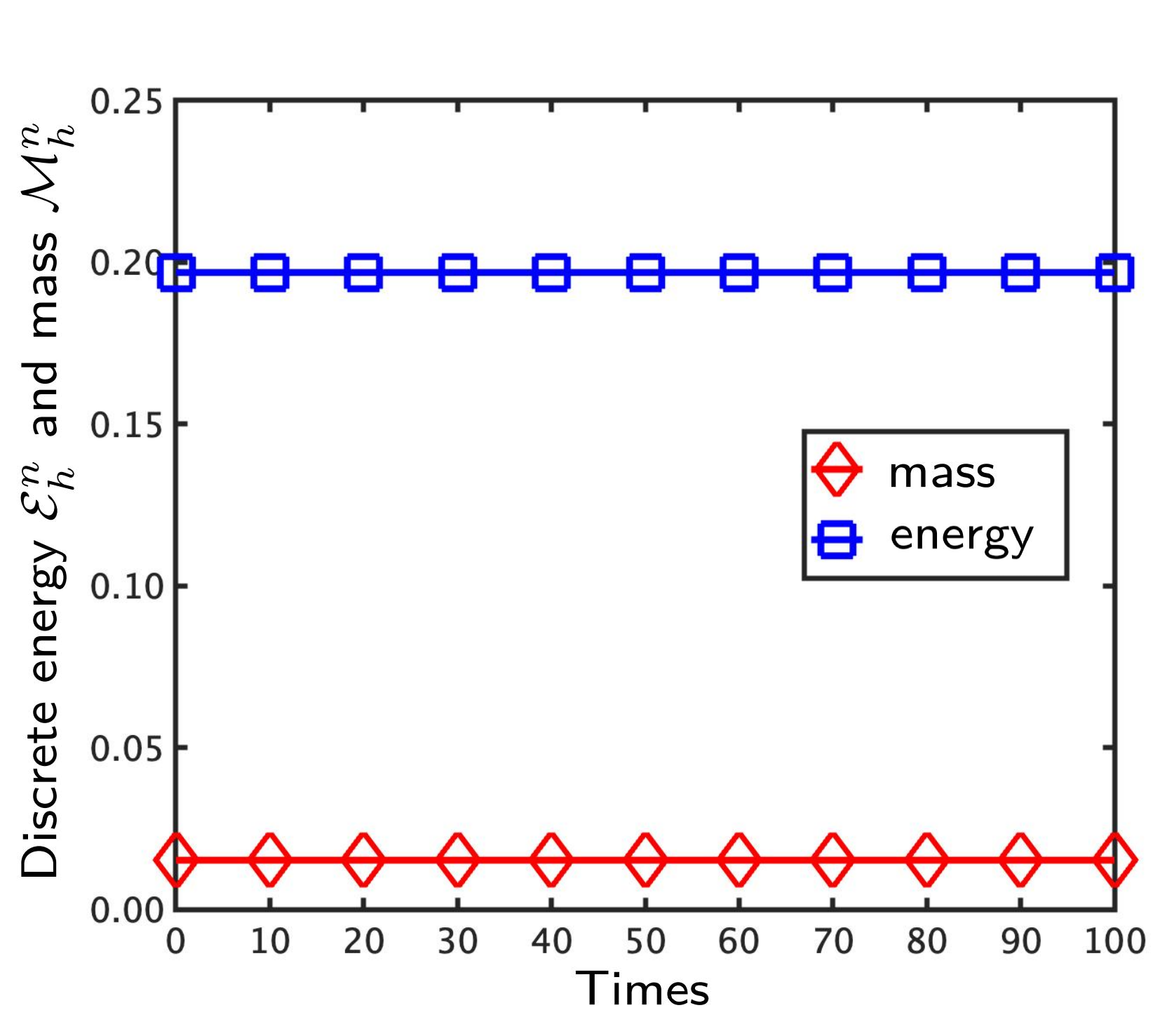,height=2in,width=2.13in}}
\hspace{6mm}
\subfloat[Schr\"{o}dinger--Poisson system]{
\epsfig{file=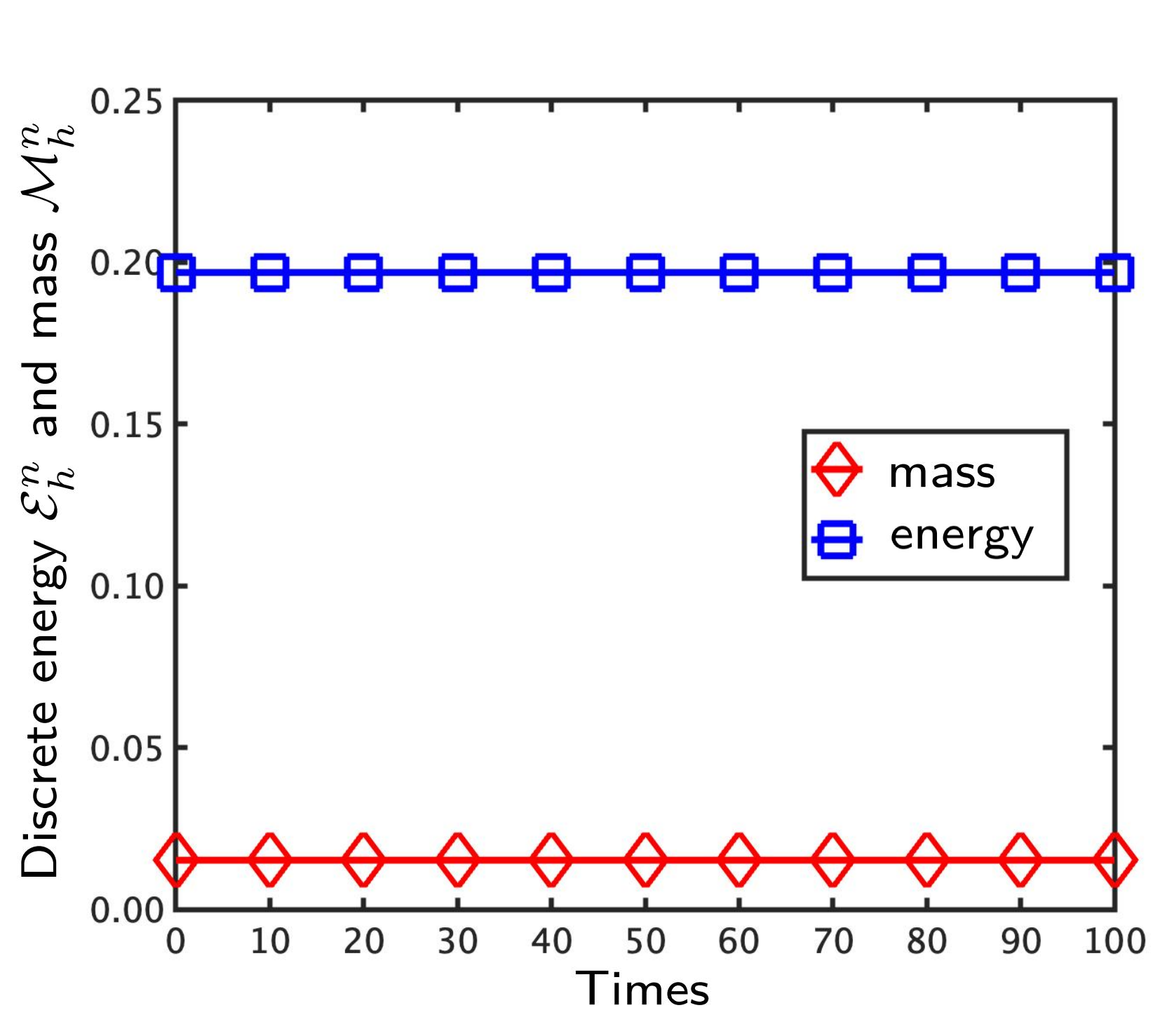,height=2in,width=2.13in}}
\caption{Evolution of discrete energy $\mathcal{E}_h^n $ and mass $\mathcal{M}_h^n$ }\label{figure}
\end{figure}
\end{example}
\section{Conclusions}
In this paper, an implicit Crank--Nicolson FE scheme is present to solve a nonlinear Schr\"{o}dinger--type system, which includes Schr\"{o}dinger--Helmholz equations and Schr\"{o}dinger--Poisson equations. The advantage of our numerical method is mass and energy conservation in the discrete level. The well-posedness (existence and uniqueness) of the fully discrete solutions are proved by Schaefer's fixed point theorem. We demonstrate optimal $L^2$ error estimates for the fully discrete solutions for Schr\"{o}dinger--Helmholz equations and Schr\"{o}dinger--Poisson equations. Finally, some numerical examples are provided to confirm the theoretical analysis.

\section{Acknowledgments}

The work of Zhuoyue Zhang and Wentao Cai was partially supported by
the Zhejiang Provincial Natural Science Foundation of China grant LY22A010019, the National Natural Science Foundation of China grant 11901142 and Fundamental Research Funds for the Provincial Universities of Zhejiang grant GK219909299001-025.

\end{document}